\newcommand\tailleFFF{0.8}
\theoremstyle{thmstyleone}%
\newtheorem{theorem}{Theorem}
\newtheorem{proposition}[theorem]{Proposition}%
\newtheorem{corollary}[theorem]{Corollary}
\newtheorem{lemma}[theorem]{Lemma}
\theoremstyle{thmstyletwo}%
\newtheorem{remark}{Remark}%
\theoremstyle{thmstylethree}%
\newtheorem{definition}{Definition}%
\begin{document}

\title[Linear Quadratic Regulators: A New Look]{Linear Quadratic Regulators: A New Look}


\author[1]{\fnm{Cédric} \sur{Join}}\email{cedric.join@univ-lorraine.fr}

\author[2]{\fnm{Emmanuel} \sur{Delaleau}}\email{emmanuel.delaleau@enib.fr}

\author*[3,4]{\fnm{Michel} \sur{Fliess}}\email{michel.fliess@sorbonne-universite.fr, michel.fliess@alien-sas.fr}

\affil[1]{\orgdiv{CRAN (CNRS, UMR 7039)}, \orgname{Universit\'{e} de Lorraine}, \orgaddress{\street{BP 239}, \postcode{54506} \city{Vand{\oe}uvre-l\`{e}s-Nancy}, \country{France}}}

\affil[2]{\orgdiv{IRDL (CNRS, UMR 6027)}, \orgname{Bretagne INP}, \orgaddress{\street{445 avenue du Technopole}, \postcode{29280} \city{Plouzané}, \country{France}}}

\affil[3]{\orgdiv{LJLL (CNRS, UMR 7598)}, \orgname{Sorbonne Universit\'{e}}, \orgaddress{\street{4 place Jussieu}, \newline \postcode{75005} \city{Paris}, \country{France}}}

\affil[4]{\orgdiv{AL.I.E.N.}, 
\orgaddress{\street{7 rue Maurice Barr\`{e}s}, \postcode{54330} \city{V\'{e}zelise}, \country{France}}}

\abstract{Linear time-invariant control systems can be considered as finitely generated modules over the commutative principal ideal ring $\mathbb{R}[\frac{d}{dt}]$  of linear differential operators with respect to the time derivative. The Kalman controllability in this algebraic language is translated as the freeness of the system module. 
Linear quadratic regulators rely on quadratic Lagrangians, or cost functions. Any flat output, i.e., any basis of the corresponding free module leads to an open-loop control strategy via an Euler-Lagrange equation, which becomes here a linear ordinary differential equation with constant coefficients. In this approach, the two-point boundary value problem, including the control variables, becomes tractable. It yields notions of optimal time horizon, optimal parameter design and optimal rest-to-rest trajectories.
The loop is closed via an intelligent controller derived from model-free control, which is known to exhibit excellent performance concerning model mismatches and disturbances.}

\keywords{LQR, controllability, observability, two-point boundary value problem, optimal time horizon, optimal parameter design, optimal-rest-to-rest trajectory, turnpike, HEOL, module theory, differential algebra, nonstandard analysis.}



\maketitle

\section{Introduction}\label{sec1}
The Kalman {\em linear quadratic regulator} (\emph{LQR}) plays a prominent r\^{o}le in control theory and engineering for more than 65 years (see, e.g., \cite{liberzon,recht}, and the textbooks \cite{kailath,kw,liberzon,lunze,sontag,trelat05,trelat24,wolovich}). In his pathbreaking paper \cite{mex}, Kalman wrote: ``The principal contribution of the paper lies in the introduction and exploitation of the concepts of \emph{controllability} and \emph{observability}.'' Those two fundamental notions were launched by Kalman \cite{moscou} in his famous plenary conference at the first IFAC Congress (Moscow, 1960). LQR, controllability, and observability are taught nowadays in the Kalman manner, i.e., by defining them via the Kalman state-variable representation. In 1990, controllability and observability were defined \cite{fliess90} more intrinsically, i.e., independently of the above representation: A time-invariant linear system becomes a finitely generated module on the commutative principal ideal ring $\mathbb{R}[\frac{d}{dt}]$ of linear differential operators $\sum_{\rm finite} a_\kappa \frac{d^\kappa}{dt^\kappa}$, $a_\kappa \in \mathbb{R}$, i.e., a most elementary algebraic object (see, e.g., \cite{jacobson,lang}). 
Then, 
\begin{itemize}
    \item controllability and module freeness are equivalent: there exists a {\em flat output},\footnote{See \cite{flmr} for the origin of this terminology} i.e., a basis of the module such that:
    \begin{itemize}
        \item its components and their derivatives are $\mathbb{R}$-linearly independent;
        \item any system variable is a $\mathbb{R}$-linear combination of those components and their derivatives up to some finite order;
    \end{itemize}
    \item observability means that the control and output variables span the system module: any system variable is a $\mathbb{R}$-linear combination of the control and output variables and their derivatives up to some finite order.
\end{itemize}

In the Kalman LQR, the Lagrangian, or cost function, $\mathcal{L}$ is a quadratic form in the state and control variables. Controllability in our module-theoretic framework implies that each of those variables may be expressed as a $\mathbb{R}$-linear combination of the flat output and their derivatives up to some finite order. The Lagrangian $\mathcal{L}$ may thus be seen as a polynomial of degree $2$ with respect to the components of flat output and their derivatives up to some finite order. The criterion $J = \int_0^T \mathcal{L}dt$, where $T > 0$ is the \emph{time horizon}, yields the Euler-Lagrange equation, i.e., the well-known equation from the calculus of variations. 
The following points, where the last four are inspired by \cite{reinhard}, \cite{zuazua}, \cite{boscain}, \cite{trelat23}, show a major departure from the Kalman LQR:
\begin{enumerate}
    \item The Euler-Lagrange equation becomes an elementary linear differential equation with constant coefficients.
    \item The two-point boundary value problem is tractable. 
    \item The criterion $J$ becomes a function of the time horizon $T$ when initial and terminal conditions are fixed. For an extremum of $J$, $T$ is said to be {\em optimal}. 
    \item The \emph{optimal parameter design} selects system parameters to optimize the criterion.
     \item $\mathcal{L}$ is not necessarily limited to a quadratic form with respect to state and control variables. By taking, for instance, higher order derivatives of the flat outputs, it is possible to choose the initial and terminal values of the control variables and, for example, to start and stop at rest.\footnote{Stopping at rest is obviously related to \emph{finite-time stability} (see, e.g., \cite{amato}).}
    \item The \emph{turnpike phenomenon} (see, e.g., \cite{trelat23}, and references therein) is approached via Robinson's \emph{nonstandard analysis} \cite{robinson}.\footnote{See, e.g., \cite{tao} for a recent discussion on nonstandard analysis, which has already been employed in practice (see, e.g., \cite{lobry} and references therein).}
\end{enumerate}

What remains, however, is to enter into true optimal control and not to remain in the world of the calculus of variations. In other words, we must complete the above results with a feedback loop. Consider, for notational simplicity, a single control variable $u$. In the Kalman LQR, there is a time-dependent row matrix $K$, given by a Riccati differential equation, such that $u = - K {\bf x}$, where ${\bf x}$ is the state column vector. Our picture looks very different. Let $u^\star$ be the nominal control corresponding to the reference trajectory given by the Euler-Lagrange equation. Set $u = u^\star + \Delta u$, where $\Delta u$ is defined by the \emph{intelligent} controller associated with the recent HEOL setting \cite{heol} which is based on the variational system.\footnote{See \cite{degorre,wien,mfpc,maroc} for first concrete illustrations with respect to nonlinear systems.}
Then, the output stays close to the reference despite model mismatches and disturbances. Such controllers are inspired by {\em model-free control} \cite{mfc1,mfc2}, which has numerous concrete applications.


Our paper is organized as follows. The module-theoretic framework of Sect. \ref{1} permits us to introduce, more or less as in \cite{fliess90,transf}, time-invariant linear systems,\footnote{Time-varying linear systems are considered in \cite{fliess90,transf}.} the state-variable representation, controllability, observability, transfer functions and matrices, and for the first time, 
\begin{itemize}
    \item \emph{well-formed} dynamics (Sect. \ref{wellformed}) where filtrations and graduations are important for understanding their structural properties;
    \item the rank of the presentation matrix (Sect. \ref{pres});
    \item {\em Lagrangians}, or \emph{cost functions}, (Sect \ref{lagr}) via the symmetric algebra of the system module; 
    \item \emph{variational systems} (Sect. \ref{variat}), which are straightforward adaptations of \cite{johnson}, where K\"{a}hler differentials (see, e.g., \cite{eisenbud}) are extended to differential algebra.\footnote{See \cite{flmr,heol} for nonlinear control systems.}
\end{itemize}
Sect. \ref{euler} employs, once again through modules, the Euler-Lagrange equations to derive open-loop optimal solutions. Solving the two-point boundary value problem is accomplished using the Wronskian (see, e.g., \cite{bostan,kaplansky}) with coefficients in a \emph{differential ring} of entire analytic functions. As already stated, it enables us to incorporate the control variables in the two-point boundary value problem. Four illustrative examples are examined in Sect. \ref{4}. {
Closing the loop in Sect. \ref{closing} is achieved via the variational system of Sect. \ref{variat} by two means:
 \begin{enumerate}
     \item a classic static state feedback for pole placement, 
     \item the HEOL setting \cite{heol}, which will undoubtedly prove to be more effective.
 \end{enumerate}
Sect. \ref{conclusion} concludes with some remarks on future research directions and on our choice of the algebraic formalism.

\section{Module-theoretic preliminaries\protect\footnote{See, e.g., \cite{bourbaki,eisenbud,jacobson,kaplansky,lang,shaf} for basics in algebra.}}\label{1}

\subsection{Rings and modules}\label{ringmodule}

The base field~$k$ is any commutative field. The set $k[\frac{d}{dt}]$ of $k$-linear differential operators $$\sum_{\rm finite}a_\alpha\frac{d^\alpha}{dt^\alpha}, \quad a_\alpha \in k$$ is a principal ideal commutative ring.
Let us restrict ourselves to the category {\bf Mod} of finitely generated $k[\frac{d}{dt}]$-modules, without any loss of generality for finite-dimensional time-invariant linear control systems. Let $\mathfrak{M}$ be such a module: 
\begin{itemize}
    \item An element $\mathfrak{m} \in \mathfrak{M}$ is said to be \emph{torsion} if and only if there exists $\pi \in k[\frac{d}{dt}]$, $\pi \neq 0$, such that $\pi \mathfrak{m} = 0$. 
    \item The set $\mathfrak{M}_{\rm tor}$ of all torsion elements in $\mathfrak{M}$ is a submodule. 
    \item $\mathfrak{M}$ is said to be \emph{torsion} (resp. \emph{torsion-free}) if and only if $\mathfrak{M} = \mathfrak{M}_{\rm tor}$ (resp. $\mathfrak{M}_{\rm tor} = \{0\}$).
    \item Any torsion-free module is \emph{free}:
    \begin{itemize}
        \item a module is free if and only if there exists a \emph{basis} $\{b_\iota \mid \iota \in I\}$; 
        \item the elements of the basis are linearly independent over $k[\frac{d}{dt}]$;
        \item $I$ is finite and two bases have the same cardinality, called its \emph{rank}, or, sometimes, its \emph{dimension}.
    \end{itemize}
    \item $\mathfrak{M} \simeq \mathfrak{M}_{\rm tor} \bigoplus \Phi$, where $\Phi$, which is isomorphic to $M/M_{\rm tor}$, is free:
    \begin{itemize}
        \item the rank of $\mathfrak{M}$, written ${\rm rk} (M)$, is the rank of $\Phi$;
        \item the rank of $\mathfrak{M}$ is $0$ if and only if $\mathfrak{M}$ is torsion.
    \end{itemize}
    \item  The dimension of $\mathfrak{M}$ as a $k$-vector space is finite if and only if $\mathfrak{M}$ is torsion.    
    \end{itemize}
    
\noindent {\bf Notation.} For any subset ${\bf w} = \{w_\iota\mid \iota \in I \} \subset \mathfrak{M}$, write ${\rm span}_{k[\frac{d}{dt}]}({\bf w})$ the submodule of $\mathfrak{M}$ generated by ${\bf w}$.

\subsection{Systems, dynamics, control and output variables}

A \emph{linear system} is a module $\Lambda$. A \emph{linear dynamics} is a linear system $\Lambda$ equipped with a finite set ${\bf u} = \{u_1, \dots, u_m\}$ of \emph{control variables} such that the quotient module $\Lambda / {\rm span}_{k[\frac{d}{dt}]}({\bf u})$ is torsion. The control variables are said to be \emph{independent} if and only if 
\begin{itemize}
    \item ${\rm span}_{k[\frac{d}{dt}]}({\bf u})$ is a free module,
    \item ${\bf u}$ is a basis of~$\Lambda$.
\end{itemize}
This independence will be assumed in the rest of the paper. A \emph{linear input-output} system is a linear dynamics which is equipped with a finite set ${\bf y} = \{y_1, \dots, y_p\}$ of \emph{output variables}.


\subsection{Equations}\label{equat}

Associate to system $\Lambda$ a short exact sequence
\begin{equation}\label{sequence}
 0 \rightarrow \mathfrak{E} \rightarrow \frak{F} \rightarrow \Lambda \rightarrow 0
\end{equation}
where $\mathfrak{F}$ is free. It is related to the linear differential equations that define $\Lambda$:
\begin{equation}
\label{eq}
  \sum_{\kappa =1}^{\mu} a_{\iota\kappa} w_\kappa = 0, \quad a_{\iota \kappa} \in k [\frac{d}{dt}], \quad
    \iota  = 1,\ldots, \mu
\end{equation}
Let $\mathfrak{F}$ be the free module with basis $W_1, \dots, W_\mu $. Let $\mathfrak{E} \subset \mathfrak{F}$ be spanned by $\sum_{\kappa =1}^{\mu} a_{\iota\kappa} W_\kappa$, $\iota = 1, \dots, \nu$.  Then $\Lambda = \mathfrak{F}/\mathfrak{E}$. Note that Eq.~\eqref{eq} may be rewritten as
\begin{equation}\label{prese}
P \begin{pmatrix}
    w_1 \\ \vdots \\ w_{\mu}
\end{pmatrix}
= 0
\end{equation}
where $P \in k[\frac{d}{dt}]^{\nu \times \mu}$ is a \emph{presentation} matrix.

\subsection{State-variable representation}
Take a linear system $\Lambda$ with control and output variables ${\bf u}$ and ${\bf y}$. Set $n = \dim_k (\Lambda / {\rm span}_{k[\frac{d}{dt}]}({\bf u}))$. A \emph{generalized state} ${\bf \xi} = \{\xi_1, \dots, \xi_n \}$ is an $n$-tuple of elements in $\Lambda$ such that its residue in $\Lambda / {\rm span}_{k[\frac{d}{dt}]}({\bf u})$ is a basis of the latter as a $k$-vector space. It yields a \emph{generalized state variable dynamics}
\begin{equation}\label{gene}
    \frac{d}{dt} \begin{pmatrix}
        \xi_1 \\ \vdots \\ \xi_n
    \end{pmatrix} 
    = \mathcal{J} \begin{pmatrix}
        \xi_1 \\ \vdots \\ \xi_n
    \end{pmatrix} + \sum_{\alpha = 0}^{\mu} \mathcal{G}_\alpha \frac{d^\alpha}{dt^\alpha}
    \begin{pmatrix}
        u_1 \\ \vdots \\ u_m
    \end{pmatrix}
\end{equation}
where $\mathcal{J} \in k^{n \times n}$, $\mathcal{G}_\alpha \in k^{n \times m}$. Two generalized states ${\bf \xi}$ and ${\bf \underline\xi} = \{\underline{\xi}_1, \dots, \underline{\xi}_n\}$ are related by a control dependent relation:
\begin{equation}\label{change}
\begin{pmatrix}
        \underline{\xi}_1 \\ \vdots \\ \underline{\xi}_n
    \end{pmatrix} = \mathcal{O} \begin{pmatrix}
        \xi_1 \\ \vdots \\ \xi_n
    \end{pmatrix} + \sum_{\rm finite} \mathcal{S}_\beta \frac{d^\beta}{dt^\beta} \begin{pmatrix}
        u_1 \\ \vdots \\ u_m
    \end{pmatrix}
\end{equation}
where $\mathcal{O} \in k^{n \times n}$ is invertible, $\mathcal{S}_\beta \in k^{n \times m}$. For the output variables, Eq.~\eqref{gene} should be completed by
\begin{equation}\label{output}
  \begin{pmatrix}
        y_1 \\ \vdots \\ y_p
    \end{pmatrix} =  \mathcal{H} 
    \begin{pmatrix}
        \xi_1 \\ \vdots \\ \xi_n
    \end{pmatrix} + \sum_{\rm finite} \mathcal{J}_\gamma \frac{d^\gamma}{dt^\gamma} \begin{pmatrix}
        u_1 \\ \vdots \\ u_m
    \end{pmatrix} 
\end{equation}
where $\mathcal{H} \in k^{p \times n}$, $\mathcal{J}_\gamma \in k^{p \times m}$. Assume that $\mu \geq 1$ and $\mathcal{G}_\mu \neq 0$ in Eq.~\eqref{gene}. Define a new generalized state via
$$
\begin{pmatrix}
        {\xi}_1 \\ \vdots \\ {\xi}_n
    \end{pmatrix}  = \begin{pmatrix}
        {\xi}_{1}^{\star} \\ \vdots \\ {\xi}_n^{\star}
    \end{pmatrix} + \mathcal{G}_\mu \frac{d^{\mu - 1}}{dt^{\mu - 1}}
    \begin{pmatrix}
        u_1 \\ \vdots \\ u_m
    \end{pmatrix} 
$$
The highest derivative of the control variables in the corresponding new generalized state variable representation is thus less or equal to $\mu - 1$. This procedure leads, by induction, to the famous \emph{Kalman state variable representation} which does not involve any derivative of the control variables:
\begin{equation}\label{kalman}
    \frac{d}{dt} \begin{pmatrix}
        x_1 \\ \vdots \\ x_n
    \end{pmatrix} 
    = F \begin{pmatrix}
        x_1 \\ \vdots \\ x_n
    \end{pmatrix} + G 
    \begin{pmatrix}
        u_1 \\ \vdots \\ u_m
    \end{pmatrix}
\end{equation}
where
\begin{itemize}
    \item $\{x_1, \dots, x_n\}$ is called a \emph{Kalman state};
    \item where $F$ and $G$ are matrices of appropriate sizes, with entries 
    in $k$. 
\end{itemize}
Eq.~\eqref{change} shows that two Kalman states are related by a classic state-variable transformation (see, e.g., \cite{kailath,lunze,sontag,wolovich}):
\begin{equation}\label{kalmanchange}
  \begin{pmatrix}
        \underline{x}_1 \\ \vdots \\ \underline{x}_n
    \end{pmatrix} 
    = T \begin{pmatrix}
        x_1 \\ \vdots \\ x_n
    \end{pmatrix} 
\end{equation}
where $T \in {\rm GL}_n (k)$ is an invertible square matrix. 

The output map is deduced from Eq.~\eqref{output}:
\begin{equation}\label{kalmanoutput}
   \begin{pmatrix}
        y_1 \\ \vdots \\ y_p
    \end{pmatrix} =  H 
    \begin{pmatrix}
        x_1 \\ \vdots \\ x_n
    \end{pmatrix} + \sum_{\rm finite} J_\gamma \frac{d^\gamma}{dt^\gamma} \begin{pmatrix}
        u_1 \\ \vdots \\ u_m
    \end{pmatrix} 
\end{equation}
where $H \in k^{p \times n}$, $J_\gamma \in k^{p \times m}$.

The following statement summarizes the previous calculations:
\begin{theorem}
Let $\Lambda$ be an input-output system where ${\bf u} = \{u_1, \dots, u_m\}$ and ${\bf y} = \{y_1, \dots, y_p\}$ are respectively the control and output variables. There exists for the dynamics a Kalman representation~\eqref{kalman} where the dimension of the Kalman state $\{x_1, \dots, x_n\}$ is equal to $\dim_k \left(\Lambda / {\rm span}_{k[\frac{d}{dt}]}({\bf u})\right)$. Two Kalman states are related by Eq.~\eqref{kalmanchange}. The output is given by Eq.~\eqref{kalmanoutput}.
\end{theorem}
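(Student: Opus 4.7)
The statement bundles three assertions: existence of a generalized state of the prescribed dimension, termination of the derivative-eliminating induction that produces a Kalman representation, and the purely $k$-linear nature of a change between two Kalman states. My plan is to establish each in turn, relying on the module-theoretic facts collected in Sect.~\ref{ringmodule}.

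First, I would observe that, since $\Lambda$ is a dynamics, the quotient $\Lambda/{\rm span}_{k[\frac{d}{dt}]}({\bf u})$ is torsion by hypothesis and finitely generated over the PID $k[\frac{d}{dt}]$, hence finite-dimensional as a $k$-vector space; this dimension is the integer $n$ in the statement. Any lift of a $k$-basis to elements $\xi_1, \dots, \xi_n \in \Lambda$ is then a generalized state. The residue of $d\xi_i/dt$ in the quotient is a $k$-linear combination $\sum_j \mathcal{J}_{ij}\xi_j$, so $d\xi_i/dt-\sum_j\mathcal{J}_{ij}\xi_j$ lies in ${\rm span}_{k[\frac{d}{dt}]}({\bf u})$ and can be expressed as a $k[\frac{d}{dt}]$-linear combination of the $u_\ell$, producing \eqref{gene}. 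Equation \eqref{output} follows by the same argument applied to each $y_\ell \in \Lambda$. Next, for the induction to Kalman form, the substitution displayed in the excerpt replaces $\xi$ by $\xi^\star$ through a correction lying in ${\rm span}_{k[\frac{d}{dt}]}({\bf u})$, so residues are preserved and $\xi^\star$ is still a generalized state; a direct computation shows that the highest order of control derivatives drops from $\mu$ to at most $\mu-1$, and iterating finitely many times yields \eqref{kalman}.

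The subtle step, and where I expect the real work to lie, is the third. Given two Kalman states $x$ and $\underline{x}$, I would apply \eqref{change} to write $\underline{x} = T x + \sum_\beta \mathcal{S}_\beta (d^\beta u/dt^\beta)$ with $T \in k^{n\times n}$ invertible. Differentiating and using $\dot x = Fx + Gu$ and $\dot{\underline{x}} = F' \underline{x} + G' u$ yields
\begin{equation*}
T F x + T G u + \sum_\beta \mathcal{S}_\beta \frac{d^{\beta+1} u}{dt^{\beta+1}} = F' T x + \sum_\beta F' \mathcal{S}_\beta \frac{d^\beta u}{dt^\beta} + G' u.
\end{equation*}
The key ingredient is that $\{x_1, \dots, x_n\} \cup \{d^\gamma u_\ell/dt^\gamma : \gamma \geq 0,\ \ell = 1, \dots, m\}$ is $k$-linearly independent in $\Lambda$: the residues of the $x_i$ form a $k$-basis of the quotient, while ${\rm span}_{k[\frac{d}{dt}]}({\bf u})$, being a free $k[\frac{d}{dt}]$-module with basis ${\bf u}$, admits the derivatives $d^\gamma u_\ell/dt^\gamma$ as its canonical $k$-basis. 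Matching coefficients in the displayed identity then yields $TF = F'T$, the recursion $\mathcal{S}_{\gamma-1}=F'\mathcal{S}_\gamma$ for $\gamma\geq 1$, and $G' = TG - F'\mathcal{S}_0$. Since only finitely many $\mathcal{S}_\beta$ are nonzero, downward induction starting from the vanishing for large $\beta$ forces $\mathcal{S}_\beta = 0$ for all $\beta$, so \eqref{change} collapses to $\underline{x} = Tx$ with $T \in {\rm GL}_n(k)$, which is precisely \eqref{kalmanchange}. Equation \eqref{kalmanoutput} is then obtained by reading \eqref{output} in the new state.
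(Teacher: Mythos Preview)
Your proposal is correct and tracks the paper's own exposition: the theorem is stated there as a summary of the preceding calculations, which construct the generalized state \eqref{gene}, perform the derivative-lowering induction to reach \eqref{kalman}, and then invoke \eqref{change} for the change of Kalman states. Where you go beyond the paper is in the third assertion: the paper simply declares that ``Eq.~\eqref{change} shows that two Kalman states are related by'' \eqref{kalmanchange} and cites textbooks, whereas you supply the actual coefficient-matching argument (using the $k$-linear independence of $\{x_i\}\cup\{u_\ell^{(\gamma)}\}$, which is exactly the direct-sum decomposition recorded in the Corollary following the theorem) and the downward induction $\mathcal{S}_{\gamma-1}=F'\mathcal{S}_\gamma$ forcing all $\mathcal{S}_\beta=0$. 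This is a genuine and correct completion of a step the paper leaves implicit.
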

The $n$-dimensional $k$-vector space $\frak{K}$ spanned by a Kalman state $\{x_1, \dots, x_n\}$ is called the \emph{Kalman vector space}.
\begin{corollary}
 In the category of $k$-vector spaces, the dynamics $\Lambda$ is the direct sum of the Kalman vector space $\mathfrak{X}$ and ${\rm span}_{k[\frac{d}{dt}]} ({\bf u})$, i.e., 
$\Lambda = \mathfrak{K} \bigoplus {\rm span}_{k[\frac{d}{dt}]} ({\bf u})$.
\end{corollary}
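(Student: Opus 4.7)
The plan is to recognize that the corollary is just the splitting, in the category of $k$-vector spaces, of the short exact sequence
\[
 0 \to {\rm span}_{k[\frac{d}{dt}]}({\bf u}) \to \Lambda \to \Lambda/{\rm span}_{k[\frac{d}{dt}]}({\bf u}) \to 0,
\]
where the Kalman vector space $\mathfrak{K}$ plays the role of an explicit $k$-linear section. All three objects are a priori $k[\frac{d}{dt}]$-modules, but we forget that structure and work only with the underlying $k$-vector space: a short exact sequence of $k$-vector spaces always splits, and one splitting is supplied by any $k$-linear lift of a basis of the quotient. The Kalman state $\{x_1,\ldots,x_n\}$ is, by its very definition, such a lift, so inclusion induces a $k$-linear isomorphism $\mathfrak{K}\xrightarrow{\sim}\Lambda/{\rm span}_{k[\frac{d}{dt}]}({\bf u})$.

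From this, the two defining properties of a direct sum would follow routinely. For the intersection, an element of $\mathfrak{K}\cap {\rm span}_{k[\frac{d}{dt}]}({\bf u})$, written $\sum_{i=1}^{n} c_i x_i$ with $c_i\in k$, projects to $\sum_i c_i\bar x_i=0$ in the quotient; the linear independence of the residues $\bar x_1,\dots,\bar x_n$ over $k$ then forces $c_1=\dots=c_n=0$. For the sum, given any $w\in\Lambda$, its residue expands uniquely as $\bar w=\sum_i c_i\bar x_i$ with $c_i\in k$, hence $w-\sum_i c_i x_i$ lies in the kernel of the projection, namely in ${\rm span}_{k[\frac{d}{dt}]}({\bf u})$, and therefore $w\in\mathfrak{K}+{\rm span}_{k[\frac{d}{dt}]}({\bf u})$.

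There is no serious technical obstacle here; the entire content is that a Kalman state is, by construction, a $k$-linear lift of a basis of $\Lambda/{\rm span}_{k[\frac{d}{dt}]}({\bf u})$, and such a lift is the same datum as a section of the canonical projection. The point worth emphasizing in the writeup is that the decomposition is only a decomposition of $k$-vector spaces and \emph{not} of $k[\frac{d}{dt}]$-modules: applying $\frac{d}{dt}$ to a Kalman state generally produces components in both summands, which is precisely what is encoded by the matrix $G$ in the Kalman representation~\eqref{kalman}.
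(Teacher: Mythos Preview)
Your argument is correct and is exactly the reasoning the paper has in mind: the corollary is stated without proof, as an immediate consequence of the construction of the Kalman state in the preceding theorem, and your short exact sequence splitting (with $\mathfrak{K}$ as the $k$-linear section) makes that explicit. The remark that the decomposition is only $k$-linear and not $k[\frac{d}{dt}]$-linear is apt and worth keeping.
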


\subsection{Controllability and observability}


\subsubsection{Controllability}

Kalman's controllability criterion states that Eq.~\eqref{kalman} defines a controllable dynamics if and only the rank of the matrix.
\begin{eqnarray*}
\mathrm{rank}
\begin{pmatrix}
    G, FG, \dots, F^{n - 1}G
\end{pmatrix}
= n
\end{eqnarray*}
where $n$ is the dimension of the Kalman state.
It is well known that non-controllability yields the existence of state variables $\{\eta_1, \dots, \eta_\nu\}$ such that
\begin{equation}\label{uncontr}
    \frac{d}{dt} \begin{pmatrix}
        \eta_1 \\ \vdots \\ \eta_\nu
    \end{pmatrix}
    = \mathfrak{N}
    \begin{pmatrix}
        \eta_1 \\ \vdots \\ \eta_\nu
    \end{pmatrix}
\end{equation}
where $\mathfrak{N} \in k^{\nu \times \nu}$, $1 \leqslant \nu \leqslant n$. This is equivalent to the existence of a nontrivial torsion sub-module of $\Lambda$. Therefore, controllability is equivalent to the freeness of the module~$\Lambda$. We are therefore led to the following definition, which is independent of any peculiar representation:
\begin{definition}
    The system $\Lambda$ is said to be \emph{controllable} if and only if the module $\Lambda$ is free. Any basis of~$\Lambda$ is called a \emph{flat output}.
\end{definition}


\subsubsection{Observability}

Eq.~\eqref{kalmanoutput} yields the following result for $\kappa \geqslant 1$
\begin{equation*}\label{kalmanobs}
  \frac{d^\kappa}{dt^\kappa}  \begin{pmatrix}
        y_1 \\ \vdots \\ y_p
    \end{pmatrix} =  H F^{\kappa - 1}
    \begin{pmatrix}
        x_1 \\ \vdots \\ x_n
    \end{pmatrix} + \mathcal{U}_k {\begin{pmatrix} u_1 \\ \vdots \\ u_m\end{pmatrix}}
\end{equation*}
where $\mathcal{U}_\kappa \in k[\frac{d}{dt}]^{p \times m}$. We are thus led to the Kalman observability matrix: 
\begin{equation*}
    \mathcal{O} = \begin{pmatrix}
        H \\ HF \\ \vdots \\ HF^{n - 1}
    \end{pmatrix}
\end{equation*}
Kalman's observability is characterized by ${\rm rk}(\mathcal{O}) = n$. It is equivalent to say that any Kalman state belongs to ${\rm span}_{k[\frac{d}{dt}]}({\bf u}, {\bf y})$, i.e., is a $k$-linear combination of the control and output variables and their derivatives up to some finite order. We are therefore led to the following more intrinsic definition:
\begin{definition}
    The system $\Lambda$ with control and output variables ${\bf u}$ and ${\bf y}$ is said to be observable if and only if $\Lambda = {\rm span}_{k[\frac{d}{dt}]}({\bf u}, {\bf y})$.
\end{definition}
\noindent In other words, System $\Lambda$ is observable if and only if any system variable, i.e., any element of $\Lambda$, is a $k$-linear combination of the control and output variables and their derivatives up to some finite order.
The next result is obvious:
\begin{proposition}
If ${\bf y}$ is a flat output, the input-output system is observable.
\end{proposition}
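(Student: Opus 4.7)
The plan is to unfold the two definitions and observe that the conclusion reduces to a one-line containment chain. Recall that a flat output is, by the definition given just above, a \emph{basis} of $\Lambda$ viewed as a $k[\frac{d}{dt}]$-module (a notion which makes sense precisely because controllability has been identified with freeness of $\Lambda$). Observability, on the other hand, is defined as the equality $\Lambda = {\rm span}_{k[\frac{d}{dt}]}({\bf u},{\bf y})$.

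The key step is then the following. If ${\bf y} = \{y_1,\dots,y_p\}$ is a basis of $\Lambda$, every element $\mathfrak{m}\in\Lambda$ admits a (unique) expression $\mathfrak{m} = \sum_{j=1}^p \pi_j\, y_j$ with $\pi_j\in k[\frac{d}{dt}]$, so that $\Lambda = {\rm span}_{k[\frac{d}{dt}]}({\bf y})$. From the trivial chain of inclusions
\begin{equation*}
\Lambda \;=\; {\rm span}_{k[\frac{d}{dt}]}({\bf y}) \;\subseteq\; {\rm span}_{k[\frac{d}{dt}]}({\bf u},{\bf y}) \;\subseteq\; \Lambda,
\end{equation*}
equality holds throughout, which is exactly the definition of observability for $\Lambda$ equipped with control variables ${\bf u}$ and output variables ${\bf y}$.

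There is essentially no obstacle here: the statement is a direct unpacking of the two definitions, and the control variables ${\bf u}$ play no role in the argument beyond being permitted among the generators. The proposition may thus be regarded as a sanity check showing that the module-theoretic definition of observability is consistent with the flatness-based notion of controllability, in the sense that a flat output is automatically an observing output.
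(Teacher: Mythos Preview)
Your proof is correct and is precisely the argument the paper has in mind: the paper itself labels the result ``obvious'' and gives no further justification, and your containment chain $\Lambda = {\rm span}_{k[\frac{d}{dt}]}({\bf y}) \subseteq {\rm span}_{k[\frac{d}{dt}]}({\bf u},{\bf y}) \subseteq \Lambda$ is exactly the intended unpacking of the definitions.
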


\subsection{Well-formed dynamics}\label{wellformed}


\subsubsection{Presentation}
Assume that $\Lambda$ is a controllable dynamics with control variables ${\bf u} = \{u_1, \dots, u_m\}$. It is said to be \emph{well-formed}\footnote{This terminology is borrowed from \cite{rudolph}, where it was introduced in the context of nonlinear control theory.} if and only if the rank of the matrix~$G$ in the Kalman state-variable representation~\eqref{kalman} is equal to~$m$.

Set $\Lambda_\rho = {\rm span}_{k[\frac{d}{dt}]} \{\lambda^{(\rho)} \mid \lambda \in \Lambda, \rho \geqslant 0\}$. The decreasing sequence
\begin{equation}\label{filtra}
    \Lambda = \Lambda_0 \supset \Lambda_1 \supset \cdots \supset \Lambda_\rho \supset \cdots
\end{equation}
defines a {\em filtration} of $\Lambda$.


\begin{proposition}
\label{well} The four following properties are equivalent:
\begin{enumerate}
\item The dynamics~$\Lambda$ is well-formed, 
 \item $\Lambda = {\rm span}_{k[\frac{d}{dt}]}(x_1, \dots, x_n)$, 
\item ${\rm span}_k ({\bf u}) \cap (\Lambda \setminus \Lambda_1) = \{0\}$,
\item $(\Lambda \setminus \Lambda_1) \subseteq \mathfrak{X}$
\end{enumerate}
\end{proposition}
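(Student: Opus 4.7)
The approach is to establish all four conditions as equivalent by arranging them as a cycle $(1)\Leftrightarrow(2)$, $(3)\Leftrightarrow(4)$, $(2)\Leftrightarrow(4)$, and $(4)\Leftrightarrow(1)$, all organized around the quotient $\Lambda/\Lambda_{1}$ with $\Lambda_{1}=\frac{d}{dt}\Lambda$. Since $\Lambda$ is free of rank $m$ by controllability, $\Lambda/\Lambda_{1}$ is a $k$-vector space of dimension $m$.

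For $(1)\Leftrightarrow(2)$, I use the Kalman relation $G{\bf u}=\dot{{\bf x}}-F{\bf x}$ in $\Lambda$. If $\mathrm{rank}(G)=m$, then $G$ has a left inverse $G^{+}\in k^{m\times n}$, so ${\bf u}=G^{+}(\dot{{\bf x}}-F{\bf x})\in\mathrm{span}_{k[\frac{d}{dt}]}({\bf x})$; combined with the already-established decomposition $\Lambda=\mathrm{span}_{k[\frac{d}{dt}]}({\bf x},{\bf u})$, this yields (2). Conversely, (2) gives (4) immediately since any $\lambda=\sum_{i}p_{i}(\tfrac{d}{dt})x_{i}$ splits as $\sum_{i}p_{i}(0)x_{i}+\mu$ with $\mu\in\Lambda_{1}$, placing $\Lambda$ in $\mathfrak{X}+\Lambda_{1}$; the strict converse $(4)\Rightarrow(1)$ is then produced by the final step below.

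For $(3)\Leftrightarrow(4)$, combine the corollary's $k$-vector-space decomposition $\Lambda=\mathfrak{X}\oplus\mathrm{span}_{k[\frac{d}{dt}]}({\bf u})$ with the inclusion $\mathrm{span}_{k}\{u_{j}^{(\alpha)}:\alpha\geq 1\}\subseteq\Lambda_{1}$; projecting onto $\pi\colon\Lambda\to\Lambda/\Lambda_{1}$ yields $\Lambda/\Lambda_{1}=\pi(\mathfrak{X})+\pi(\mathrm{span}_{k}({\bf u}))$. Reading (4) as $\pi(\mathfrak{X})=\Lambda/\Lambda_{1}$ and (3) as $\pi(\mathrm{span}_{k}({\bf u}))\subseteq\pi(\mathfrak{X})$, each forces the other. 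To complete $(4)\Rightarrow(2)$, iterate $\Lambda=\mathfrak{X}+\frac{d}{dt}\Lambda$ to obtain $\Lambda=\sum_{\alpha=0}^{N}(\tfrac{d}{dt})^{\alpha}\mathfrak{X}+\Lambda_{N+1}$ for every $N\geq 0$; since $\bigcap_{N}\Lambda_{N}=\{0\}$ in the free module $\Lambda\cong k[\frac{d}{dt}]^{m}$, the conclusion $\Lambda=\mathrm{span}_{k[\frac{d}{dt}]}({\bf x})$ follows.

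The step $(4)\Leftrightarrow(1)$ is the delicate one and is the main obstacle. Tensoring the presentation of $\Lambda$ with $k=k[\frac{d}{dt}]/(\frac{d}{dt})$ yields $\Lambda/\Lambda_{1}\cong k^{n+m}/\mathrm{rowspan}(F\mid G)$, of $k$-dimension $m$ since controllability forces $\mathrm{rank}(F\mid G)=n$. The image of $\mathfrak{X}$ in this quotient corresponds to the projection of the first-$n$-coordinate subspace $k^{n}\hookrightarrow k^{n+m}$, and a direct linear-algebra computation in $k^{n+m}/\mathrm{rowspan}(F\mid G)$ shows that this image exhausts $\Lambda/\Lambda_{1}$ precisely when for every $b\in k^{m}$ there is $v\in k^{n}$ with $G^{\top}v=b$, that is, iff $\mathrm{rank}(G)=m$. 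This is the place where the intrinsic module-theoretic condition (4) is tied back to the representation-dependent rank condition (1) on the Kalman matrices.
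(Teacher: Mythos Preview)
The paper's own proof is the single line ``$1\Rightarrow 2$: obvious, $2\Rightarrow 3$: obvious, $3\Rightarrow 4$: obvious, $4\Rightarrow 1$: obvious,'' so you are supplying the content the authors omit; your route is necessarily more explicit. Your reformulation of (3) and (4) through the projection $\pi\colon\Lambda\to\Lambda/\Lambda_1$ is the sensible reading of the paper's loose notation $\Lambda\setminus\Lambda_1$ (a literal set-theoretic reading fails: for $\dot{x}=ax+bu$ with $a,b\neq 0$ one has $u=\tfrac{1}{b}\dot{x}-\tfrac{a}{b}x\notin\Lambda_1$, so the literal form of (3) would be violated despite well-formedness). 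Under your reading, the implications $(1)\Rightarrow(2)\Rightarrow(4)$, the equivalence $(3)\Leftrightarrow(4)$, and your tensor-with-$k$ computation identifying $\pi(\mathfrak{X})=\Lambda/\Lambda_1$ with $\mathrm{rank}(G)=m$ are all correct and already give a full proof.

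The one genuine soft spot is your direct iteration for $(4)\Rightarrow(2)$. From $\Lambda=\sum_{\alpha\le N}(\tfrac{d}{dt})^\alpha\mathfrak{X}+\Lambda_{N+1}$ for all $N$ you invoke $\bigcap_N\Lambda_N=\{0\}$ in $\Lambda$; but what the argument actually needs is $\bigcap_N(\tfrac{d}{dt})^N(\Lambda/M)=\{0\}$ for $M=\mathrm{span}_{k[\frac{d}{dt}]}(\mathbf{x})$, and a finitely generated $k[\tfrac{d}{dt}]$-module can satisfy $(\tfrac{d}{dt})N=N$ without vanishing (take $N=k[\tfrac{d}{dt}]/(\tfrac{d}{dt}-1)$). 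The desired vanishing does hold here because $\Lambda/M$ is in fact free (from the Kalman presentation, $\Lambda/M\cong k[\tfrac{d}{dt}]^m$ modulo the $k[\tfrac{d}{dt}]$-span of the rows of $G$), but you do not establish this. The gap is harmless for the overall argument, since your chain $(4)\Rightarrow(1)\Rightarrow(2)$ already delivers $(4)\Rightarrow(2)$; the iteration step is redundant and can simply be removed.
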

\begin{proof}
1 $\Rightarrow$ 2: obvious, 2 $\Rightarrow$ 3: obvious, 3 $\Rightarrow$ 4: obvious, 4 $\Rightarrow$ 1: obvious. 
\end{proof}





From now on, $\Lambda$ is assumed to be well-formed. Then 
\begin{corollary}
$m \leqslant n$.
\end{corollary}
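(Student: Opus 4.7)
The plan is to use Proposition \ref{well} to convert the well-formed hypothesis into the generation statement $\Lambda = \mathrm{span}_{k[\frac{d}{dt}]}(x_1, \dots, x_n)$, and then compare ranks.

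First, I would recall that $\Lambda$ is controllable by assumption, which by the preceding definition means $\Lambda$ is free as a $k[\frac{d}{dt}]$-module. Since the control variables $\mathbf{u} = \{u_1, \dots, u_m\}$ are assumed to be independent and form a basis of $\Lambda$, the rank of the free module $\Lambda$ is exactly $m$. This is the key structural fact about $\Lambda$.

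Next, I would invoke property (2) of Proposition \ref{well}, which is equivalent to $\Lambda$ being well-formed: the Kalman state $\{x_1, \dots, x_n\}$ is a generating set of $\Lambda$ as a $k[\frac{d}{dt}]$-module. Over the commutative principal ideal ring $k[\frac{d}{dt}]$, any generating set of a free module of rank~$r$ must have cardinality at least~$r$, because the images of the generators in a quotient by a maximal ideal still generate a $k$-vector space of dimension equal to the rank. Applied to our situation with $r=m$ and generating set of cardinality $n$, this yields $n \geqslant m$.

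Alternatively, one could argue directly from the Kalman representation~\eqref{kalman}: well-formedness is \emph{defined} as $\mathrm{rank}(G) = m$, and $G \in k^{n \times m}$, so $m = \mathrm{rank}(G) \leqslant n$ is immediate from matrix rank considerations. Both routes are essentially trivial; there is no real obstacle, the corollary being a direct consequence of the definitions once Proposition \ref{well} is in place.
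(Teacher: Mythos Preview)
Your proposal is correct. The paper states the corollary without proof, treating it as immediate from the definition of well-formedness; your alternative argument via $\mathrm{rank}(G)=m$ with $G\in k^{n\times m}$ is exactly the one-line justification the authors have in mind, and your first route through Proposition~\ref{well}(2) is a valid (if slightly longer) variant.
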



Via the filtration~\eqref{filtra}, $\Lambda$ may be associated to a {\em graded} module $$\bigoplus_{\rho \geqslant 0} \Lambda_\rho / \Lambda_{\rho + 1}$$ which is \emph{graded-free} of rank~$m$. Any element in $\Lambda$ corresponding to $\Lambda_\rho / \Lambda_{\rho + 1}$ is said to be \emph{homogeneous of order $\rho$}. The set of such elements is a $k$-vector space of dimension $m$.
The next result follows at once:
\begin{lemma}
    $\Lambda \setminus \Lambda_1$ is a $k$-vector space of dimension $m$.
\end{lemma}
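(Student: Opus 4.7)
The plan is to reduce everything to the freeness of $\Lambda$ as a $k[\tfrac{d}{dt}]$-module. By standing assumption, the control variables ${\bf u}=\{u_1,\dots,u_m\}$ are independent, so under controllability (which identifies freeness of $\Lambda$) they form a basis, and $\Lambda\cong \bigoplus_{i=1}^{m} k[\tfrac{d}{dt}]\,u_i$ is free of rank $m$. I will interpret the statement ``$\Lambda\setminus\Lambda_1$ is a $k$-vector space of dimension~$m$'' in the sense of the preceding paragraph, namely as the assertion that the degree-$0$ graded piece $\Lambda_0/\Lambda_1=\Lambda/\Lambda_1$ is a $k$-vector space of dimension~$m$ (its nonzero coset representatives being precisely the elements of $\Lambda\setminus\Lambda_1$).

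The first key step is to identify $\Lambda_1$ with $\tfrac{d}{dt}\Lambda$. By definition, $\Lambda_1$ is the $k[\tfrac{d}{dt}]$-submodule of $\Lambda$ generated by the set of first derivatives $\{\lambda'\mid\lambda\in\Lambda\}$. But this set already equals $\tfrac{d}{dt}\Lambda$, which is itself stable under multiplication by $k[\tfrac{d}{dt}]$ (since $p(\tfrac{d}{dt})\cdot\tfrac{d}{dt}\lambda=\tfrac{d}{dt}(p(\tfrac{d}{dt})\lambda)$). Hence no further generation is needed and $\Lambda_1=\tfrac{d}{dt}\Lambda$.

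The second step is then a direct computation with the flat basis. Using $\Lambda\cong \bigoplus_{i=1}^{m} k[\tfrac{d}{dt}]\,u_i$, one obtains $\Lambda_1=\tfrac{d}{dt}\Lambda=\bigoplus_{i=1}^{m}\tfrac{d}{dt}\,k[\tfrac{d}{dt}]\,u_i$, so that
\[
\Lambda/\Lambda_1\;\cong\;\bigoplus_{i=1}^{m} k\bigl[\tfrac{d}{dt}\bigr]\big/\tfrac{d}{dt}\,k\bigl[\tfrac{d}{dt}\bigr]\;\cong\;k^{m},
\]
a $k$-vector space of dimension $m$. Equivalently, this is just the degree-$0$ component of the associated graded module $\bigoplus_{\rho\geqslant 0}\Lambda_\rho/\Lambda_{\rho+1}$, whose graded-freeness of rank $m$ was noted just before the lemma; so the claim may even be regarded as an immediate specialization of that remark to $\rho=0$.

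The only genuine subtlety, rather than an obstacle, is the notational one: $\Lambda\setminus\Lambda_1$ is a set-theoretic complement and does not contain $0$, so strictly speaking it is not a $k$-vector space. The intended reading (consistent with the preceding discussion of homogeneous order-$\rho$ elements) is that the nonzero cosets of $\Lambda/\Lambda_1$ are represented exactly by $\Lambda\setminus\Lambda_1$, and the $k$-vector space structure is the one inherited from the quotient; with this understood, the argument above is complete.
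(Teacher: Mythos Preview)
Your overall strategy---identify $\Lambda_1$ with $\tfrac{d}{dt}\Lambda$, compute $\Lambda/\Lambda_1$ using a module basis, and read off dimension~$m$---is exactly the content behind the paper's ``follows at once'' from graded-freeness. Your handling of the notational abuse $\Lambda\setminus\Lambda_1$ versus $\Lambda/\Lambda_1$ is also correct and worth making explicit.

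There is, however, one genuine slip. You take the control variables $u_1,\dots,u_m$ as a basis of the free module $\Lambda$, citing the paper's definition of ``independent'' controls. That bullet in the paper is evidently a misprint (it should say that ${\bf u}$ is a basis of ${\rm span}_{k[\frac{d}{dt}]}({\bf u})$, not of $\Lambda$): in the controllable canonical form of Section~\ref{cano}, for instance, $u=\frac{1}{b}\bigl(x_1^{(n)}+a_{n-1}x_1^{(n-1)}+\cdots+a_0x_1\bigr)$ is visibly not a module basis of $\Lambda$ when $n\geqslant 1$. More to the point, in the well-formed setting where the lemma lives, Proposition~\ref{well}(3) gives ${\rm span}_k({\bf u})\cap(\Lambda\setminus\Lambda_1)=\{0\}$, so each $u_i$ lies in $\Lambda_1$; a module-basis element, by contrast, can never lie in $\tfrac{d}{dt}\Lambda$ (else $(1-\tfrac{d}{dt}\,p(\tfrac{d}{dt}))$ would annihilate it for some $p$). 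So your chosen basis is the one family that cannot work here.

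The fix is immediate: run the identical computation with a flat output ${\bf y}=\{y_1,\dots,y_m\}$ in place of ${\bf u}$. Then $\Lambda=\bigoplus_i k[\tfrac{d}{dt}]\,y_i$, $\Lambda_1=\bigoplus_i \tfrac{d}{dt}\,k[\tfrac{d}{dt}]\,y_i$, and $\Lambda/\Lambda_1\cong k^m$ exactly as you wrote. With that substitution your proof is complete and in fact supplies the details the paper omits.
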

$\Lambda \setminus \Lambda_1$ is called the \emph{roof} of $\Lambda$. We have proved that the flat output may be chosen in the Kalman vector space:
\begin{proposition}
    Any basis of the roof as a vector space is a flat output.
\end{proposition}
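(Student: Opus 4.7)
The strategy is to combine three ingredients: controllability forces $\Lambda$ to be a free $k[\frac{d}{dt}]$-module of rank $m$; well-formedness places the roof inside the Kalman vector space $\mathfrak{X}$; and the graded-free structure $\mathrm{gr}(\Lambda)\simeq k[\frac{d}{dt}]^{m}$ propagates a basis in degree $0$ to a generating set of all of $\Lambda$.

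\emph{Step 1 (rank and reduction).} By controllability (equivalent, per the earlier Definition, to the freeness of the module) together with the standing independence of the control variables, $\Lambda$ is free over $k[\frac{d}{dt}]$ of rank exactly $m$. Since $k[\frac{d}{dt}]$ is a PID, any generating set of $m$ elements of such a module is automatically a basis. Hence the proposition reduces to showing that a basis $\{b_{1},\dots,b_{m}\}$ of the roof generates $\Lambda$ over $k[\frac{d}{dt}]$.

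\emph{Step 2 (roof sits in $\mathfrak{X}$).} Let $\{b_{1},\dots,b_{m}\}$ be a basis of the roof. By the preceding Lemma, the classes $\bar b_{i}$ form a $k$-basis of the degree-$0$ piece $\Lambda_{0}/\Lambda_{1}$, and Proposition~\ref{well}, item~4, allows the $b_{i}$ to be chosen in $\mathfrak{X}$. Via the filtration, the graded module $\bigoplus_{\rho}\Lambda_{\rho}/\Lambda_{\rho+1}$ is graded-free of rank $m$, so $\{(d/dt)^{\rho}\bar b_{i}\}_{i,\rho}$ is a homogeneous $k[\frac{d}{dt}]$-basis of $\mathrm{gr}(\Lambda)$.

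\emph{Step 3 (generation by filtered lift).} Let $\lambda\in\Lambda$. Iterate the following: the class of $\lambda$ in $\Lambda_{\rho}/\Lambda_{\rho+1}$ at its leading degree $\rho$ is a $k$-linear combination $\sum_{i}c_{i}(d/dt)^{\rho}\bar b_{i}$, so subtracting $\sum_{i}c_{i}(d/dt)^{\rho}b_{i}$ produces a remainder in $\Lambda_{\rho+1}$. Using the Hausdorff property $\bigcap_{\rho}\Lambda_{\rho}=\{0\}$, which holds because a free module is torsion-free, the remainder is eventually forced to vanish and $\lambda$ is displayed as a $k[\frac{d}{dt}]$-combination of the $b_{i}$. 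Combined with Step~1 this concludes the proof.

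\emph{Main obstacle.} The delicate point is the termination in Step~3. Because $k[\frac{d}{dt}]$ is not local, Nakayama-type arguments for filtered modules do not apply verbatim, and mere Hausdorffness of the filtration is not enough to force a polynomial (as opposed to a formal power series) expression. What makes the iteration terminate is the combination of finite-dimensionality of $\mathfrak{X}$ and the $k$-linear splitting $\Lambda=\mathfrak{X}\oplus\mathrm{span}_{k[\frac{d}{dt}]}({\bf u})$ (the Corollary after the state-space Theorem), which confines the successive remainders to a finite-dimensional piece of the filtration and forces them to be zero after finitely many steps.
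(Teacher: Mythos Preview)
Your Steps~1--2 are sound, and the paper itself offers no argument beyond the graded-free assertion you invoke, so the strategy matches. The genuine gap is the termination claim in Step~3 and the ``Main obstacle'' paragraph: neither Hausdorffness of the filtration nor the splitting $\Lambda=\mathfrak{X}\oplus\mathrm{span}_{k[\frac{d}{dt}]}(\mathbf{u})$ forces the filtered-lift iteration to stop. Take the single-input controllable canonical form with $n=2$, so $\Lambda=k[\tfrac{d}{dt}]\cdot x_{1}$, $x_{2}=\dot x_{1}$, $\mathfrak{X}=\mathrm{span}_{k}(x_{1},x_{2})$. The element $b_{1}=x_{1}+x_{2}=(1+\tfrac{d}{dt})x_{1}$ lies in $\mathfrak{X}$ and maps to a basis of $\Lambda_{0}/\Lambda_{1}$, so under your reading it is a legitimate basis of the roof. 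Running your iteration on $\lambda=x_{1}$ yields remainders $-\dot x_{1},\,\ddot x_{1},\,-x_{1}^{(3)},\ldots$, which never vanish and are not confined to $\mathfrak{X}$ (already $\ddot x_{1}=-a_{0}x_{1}-a_{1}x_{2}+bu$ escapes). Indeed $b_{1}$ is \emph{not} a flat output, since $1+\tfrac{d}{dt}$ is not a unit in $k[\tfrac{d}{dt}]$.

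The root cause is that an arbitrary lift of a basis of $\Lambda_{0}/\Lambda_{1}$ corresponds to a transition matrix $M\in k[\tfrac{d}{dt}]^{m\times m}$ with $\det M$ having nonzero constant term, whereas being a flat output requires $\det M\in k^{\times}$. The paper's phrase ``the flat output may be chosen in the Kalman vector space'' right before the Proposition reads as an existence claim, and its notation $\Lambda\setminus\Lambda_{1}$ for the roof is not literally a $k$-subspace; to make ``any basis of the roof'' true one must pin the roof down as a \emph{specific} $m$-dimensional subspace (the $k$-span of an actual flat output), not merely an arbitrary complement of $\Lambda_{1}$ inside $\mathfrak{X}$. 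Your proof would go through once that identification is made, but as written the termination step is unjustified and in fact fails.
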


\subsubsection{An elementary example: 
Controllable canonical form}\label{elem}\label{cano}
Let $\Lambda$ be a well-formed controllable dynamics with a single control $u$. Any basis $x_1$ of $\Lambda$ is unique up to a multiplication by a non-zero scalar, i.e., by an element in $k \setminus \{0\}$. Set $n = \dim_k (\Lambda / {\rm span}_{k[\frac{d}{dt}]} (u)$. It yields
the well-known {\em controllable canonical form} (see,
e.g, \cite{kailath,lunze,sontag,wolovich})
\begin{equation}\label{ex1}
\begin{cases}
\dot{x}_1 &= x_2 \\
&\vdots \\
\dot{x}_n &= -a_0 x_1 - \cdots - a_{n-1}x_n + b u  
\end{cases}
\end{equation}
where $a_0, \dots, a_{n-1}, b \in k$, $b \neq 0$. 

\begin{remark}
The most celebrated flat output corresponds, of course, to the Brunovsk\'{y} canonical form \cite{brunov} (see also \cite{sontag}), which should be viewed as an extension to several control variables of the controllable canonical form. See \cite{ng} for further considerations on flat outputs which might be useful in our optimal control setting.
\end{remark}

\subsection{Laplace functor}
\subsubsection{Definition}
Let $k(s)$, $s = \frac{d}{dt}$, be the quotient field of the integral ring $k[\frac{d}{dt}]$. Consider the category {\bf Vect} of finite-dimensional $k(s)$-vector spaces. Let $\Lambda$ be a system. Introduce the localization or tensor product $$\hat{\Lambda} = k(s) \bigotimes_{k[\frac{d}{dt}]} \Lambda$$ $\hat{\Lambda}$ is a $k(s)$-vector space.\footnote{To simplify notations, write $\bigotimes$ instead of $\bigotimes_{k[\frac{d}{dt}]}$ in the sequel.} The kernel of the $k[\frac{d}{dt}]$-linear morphism $\Lambda \longrightarrow \hat{\Lambda}$, $\lambda \longmapsto \hat{\lambda} = 
1 \bigotimes \lambda$ is the torsion submodule of $\Lambda$. This is a functor between the categories {\bf Mod} and {\bf Vect},  which is called the \emph{Laplace functor}.\footnote{The Laplace functor is replacing the classic Laplace transform which is a mainstay in control for introducing transfer functions and matrices (see, e.g., the textbooks \cite{kailath,lunze,sontag,wolovich}).} Then
$$
\dim_{k(s)} (\hat{\Lambda}) = {\rm rk} (\Lambda)
$$
If $\Lambda_2 \subset \Lambda_1$ and if the quotient module $\Lambda_1 / \Lambda_2$ is torsion, then $\hat{\Lambda}_2 = \hat{\Lambda}_1$. 
\subsubsection{Transfer matrix}
Consider now a system $\Lambda$, with control and output variables ${\bf u} = \{u_1, \dots, u_m\}$ and ${\bf y} = \{y_1, \dots, y_p\}$ respectively. The previous properties show that the independence of the control variables is equivalent to the fact that $\hat{u}_1, \dots, \hat{u}_m$ is a basis of the $k(s)$-vector space $\hat{\Lambda}$. In this case, then
\begin{equation*}
    \begin{pmatrix}
        \hat{y}_1 \\ \vdots \\ \hat{y}_p
    \end{pmatrix} = T \begin{pmatrix}
        \hat{u}_1 \\ \vdots \\ \hat{u}_m
    \end{pmatrix}
\end{equation*}
where $T \in k(s)^{p \times m}$ is the {\em transfer matrix}. If $m = p = 1$, $T = \frac{\mathfrak{p}}{\mathfrak{q}}$, where $\mathfrak{p}, \mathfrak{q} \in k[s]$, $(\mathfrak{p}, \mathfrak{q}) = 1$, is called the {\em transfer function}. The following properties are obvious:
\begin{proposition}
Assume that $\Lambda$ is controllable. Then $\bf y$ is a flat output if and only if 
\begin{itemize}
    \item the transfer matrix $T$, is square and invertible,
    \item the entries of its inverse belong to $k[s]$.
\end{itemize}
\end{proposition}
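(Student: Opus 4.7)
The plan is to exploit the fact that controllability, i.e., freeness of $\Lambda$, makes $\Lambda$ torsion-free, so that the Laplace map $\Lambda \to \hat\Lambda$ is injective (its kernel being $\Lambda_{\mathrm{tor}} = \{0\}$). Concretely, this means that any identity in $\hat\Lambda$ whose two sides are localizations of bona fide elements of $\Lambda$ lifts to an identity in $\Lambda$ itself; in particular, it lifts whenever the identity is written using operators in $k[s]$ applied to elements of $\Lambda$. The independence assumption on the control variables makes $\hat{\bf u}$ a $k(s)$-basis of $\hat\Lambda$, so $m = \mathrm{rk}(\Lambda)$, and ${\bf u}$ is itself a $k[s]$-basis of $\Lambda$.

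For the direct implication, I would assume ${\bf y}$ is a flat output, i.e., a $k[s]$-basis of $\Lambda$. Then $p = \mathrm{rk}(\Lambda) = m$, so $T$ is square. Each $u_i$ is expressible as a $k[s]$-linear combination of $y_1,\dots,y_m$, yielding a matrix identity ${\bf u} = S\,{\bf y}$ with $S \in k[s]^{m\times m}$. Applying the Laplace functor and substituting the defining relation $\hat{\bf y} = T\,\hat{\bf u}$ gives $(ST)\,\hat{\bf u} = \hat{\bf u}$. Since $\hat{\bf u}$ is a $k(s)$-basis of $\hat\Lambda$, the matrix $ST$ must equal $I_m$, so $T^{-1} = S \in k[s]^{m\times m}$.

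For the converse, squareness gives $p = m$, and the identity $\hat{\bf u} = T^{-1}\,\hat{\bf y}$ in $\hat\Lambda$ involves only polynomial operators, so by the injectivity noted above it lifts to ${\bf u} = T^{-1}\,{\bf y}$ in $\Lambda$. Hence each $u_i$ lies in $\mathrm{span}_{k[s]}({\bf y})$, giving $\mathrm{span}_{k[s]}({\bf u}) \subseteq \mathrm{span}_{k[s]}({\bf y}) \subseteq \Lambda$; since ${\bf u}$ is a basis of $\Lambda$, both inclusions collapse to equalities. Linear independence of ${\bf y}$ over $k[s]$ then follows from the invertibility of $T$: a relation $\sum a_j y_j = 0$ in $\Lambda$ would localize to a $k(s)$-relation among the $\hat y_j$, but these form a $k(s)$-basis of $\hat\Lambda$ (because $\hat{\bf y} = T\,\hat{\bf u}$ with $T$ invertible), so all the $a_j$ must vanish. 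Therefore ${\bf y}$ is a basis of $\Lambda$, i.e., a flat output.

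The step I expect to be the main obstacle is precisely the lifting from $\hat\Lambda$ to $\Lambda$ in the converse direction: it is the single place where the polynomiality of $T^{-1}$ and the controllability of $\Lambda$ are jointly essential. Polynomiality ensures that $T^{-1}\,{\bf y}$ makes sense as an element of $\Lambda^m$, while torsion-freeness (from controllability) ensures that the equality in $\hat\Lambda$ reflects a genuine equality in $\Lambda$; dropping either assumption, one can build examples where the converse fails. Everything else is bookkeeping with the dimension count in the category \textbf{Vect} and with the basis property of ${\bf u}$.
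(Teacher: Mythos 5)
Your forward implication is sound: from a flat output ${\bf y}$ you correctly get $p=m$, write ${\bf u}=S{\bf y}$ with $S\in k[s]^{m\times m}$, and deduce $ST=I_m$ hence $T^{-1}=S$ polynomial. The converse, however, has a genuine gap at the step ``since ${\bf u}$ is a basis of $\Lambda$, both inclusions collapse to equalities.'' Independence of the control variables means that ${\bf u}$ is a basis of the submodule ${\rm span}_{k[\frac{d}{dt}]}({\bf u})$ it generates, not of $\Lambda$ itself; were it a basis of $\Lambda$, the quotient $\Lambda/{\rm span}_{k[\frac{d}{dt}]}({\bf u})$ would vanish and every dynamics would have a zero-dimensional Kalman state. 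What your lifting argument actually yields is the chain ${\rm span}_{k[\frac{d}{dt}]}({\bf u})\subseteq{\rm span}_{k[\frac{d}{dt}]}({\bf y})\subseteq\Lambda$ of three free modules of the same rank $m$, and over $k[\frac{d}{dt}]$ equality of ranks does not force equality of modules: already ${\rm span}_{k[\frac{d}{dt}]}(\dot x)\subsetneq{\rm span}_{k[\frac{d}{dt}]}(x)$ inside a rank-one free module with basis $x$. So you cannot conclude ${\rm span}_{k[\frac{d}{dt}]}({\bf y})=\Lambda$, which is precisely what flatness of ${\bf y}$ requires.

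Concretely, the converse as you argue it fails for $\dot x=u$ with output $y=u$: there $\Lambda={\rm span}_{k[\frac{d}{dt}]}(x)$, the transfer matrix is $T=1$, square and invertible with polynomial inverse, yet $y=\dot x$ is not a basis of $\Lambda$. The missing ingredient is observability, i.e., $\Lambda={\rm span}_{k[\frac{d}{dt}]}({\bf u},{\bf y})$: under that hypothesis your inclusion ${\rm span}_{k[\frac{d}{dt}]}({\bf u})\subseteq{\rm span}_{k[\frac{d}{dt}]}({\bf y})$ immediately gives ${\rm span}_{k[\frac{d}{dt}]}({\bf y})={\rm span}_{k[\frac{d}{dt}]}({\bf u},{\bf y})=\Lambda$, and combined with your (correct) linear-independence argument from the invertibility of $T$ the converse closes. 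The remaining ingredients of your proof --- the lifting of $\hat{\bf u}=T^{-1}\hat{\bf y}$ to $\Lambda$ via torsion-freeness, and the identification of left and right inverses over the field $k(s)$ --- are fine; the paper itself offers no argument (it declares the properties obvious), so this observability hypothesis, implicit at best, is the one point you must make explicit.
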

\begin{corollary}
If $m = p = 1$, the output of the controllable monovariable system is flat if and only if the numerator $\mathfrak{p}$ of the transfer function $\frac{\mathfrak{p}}{\mathfrak{q}}$ is a non-zero constant, i.e., belongs to $k \setminus \{0\}$. 
\end{corollary}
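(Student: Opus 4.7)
The plan is to read this corollary as the direct scalar specialization of the preceding proposition. When $m = p = 1$, the transfer matrix $T = \mathfrak{p}/\mathfrak{q}$ is a $1 \times 1$ matrix with entries in $k(s)$; it is ``square and invertible'' exactly when $\mathfrak{p} \neq 0$, and its inverse is the scalar $T^{-1} = \mathfrak{q}/\mathfrak{p} \in k(s)$.

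Next I would apply the flatness criterion from the previous proposition: $y$ is a flat output if and only if $T^{-1} \in k[s]$, i.e., if and only if $\mathfrak{p}$ divides $\mathfrak{q}$ in the polynomial ring $k[s]$. Here the coprimality hypothesis $(\mathfrak{p}, \mathfrak{q}) = 1$, built into the definition of the transfer function, is the decisive ingredient: any common divisor of $\mathfrak{p}$ and $\mathfrak{q}$ must be a unit, so $\mathfrak{p} \mid \mathfrak{q}$ together with $(\mathfrak{p}, \mathfrak{q}) = 1$ forces $\mathfrak{p}$ itself to be a unit of $k[s]$. Since $k$ is a field and $k[s]$ is the polynomial ring over it, its units are precisely the nonzero constants, giving $\mathfrak{p} \in k \setminus \{0\}$.

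The converse is immediate: if $\mathfrak{p} \in k \setminus \{0\}$, then $T^{-1} = \mathfrak{p}^{-1}\mathfrak{q}$ manifestly lies in $k[s]$, and the conditions of the preceding proposition are satisfied, so $y$ is flat. There is no real obstacle in this argument; the only point requiring care is to invoke the standing coprimality convention on $(\mathfrak{p},\mathfrak{q})$, without which $\mathfrak{p}$ could divide $\mathfrak{q}$ while being a non-constant polynomial and the equivalence would fail.
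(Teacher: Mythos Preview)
Your proof is correct and is precisely the natural unpacking of the corollary from the preceding proposition; the paper itself gives no proof, treating both the proposition and this corollary as ``obvious,'' so your argument is exactly the intended specialization. Your explicit use of the coprimality convention $(\mathfrak{p},\mathfrak{q})=1$ to force $\mathfrak{p}$ to be a unit is the right key step.
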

\subsubsection{Rank of the presentation matrix}\label{pres}
Via the Laplace functor, which is exact, Eqs.~\eqref{sequence} and~\eqref{prese} become, respectively, 
\begin{equation*}\label{seqtransf}
 0 \rightarrow \left(k(s) \bigotimes\frak{E} = {\hat{\frak{E}}}\right) \rightarrow \left({k(s)} \bigotimes\frak{F} = \hat{\frak{F}}\right) \rightarrow \hat{\Lambda} \rightarrow 0
\end{equation*}
and
\begin{equation}\label{preslaplace}
P \begin{pmatrix}
1 \bigotimes w_1 =  \hat{w}_1 \\ \vdots \\ 1 \bigotimes w_\mu = \hat{w}_\mu
\end{pmatrix}
= 0
\end{equation}
where $P \in k[s]^{\nu\times\mu}$. Thus $\dim_{k(s)} (\hat{\Lambda}) = \mu - {\rm rk} (P)$, where ${\rm rk} (P) \leqslant \inf (\mu, \nu)$. It yields
\begin{proposition}
${\rm rk} (\Lambda) = \mu - {\rm rk} (P)$.
\end{proposition}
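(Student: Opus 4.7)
The plan is to apply the Laplace functor to the short exact sequence \eqref{sequence} and convert the claim into an elementary dimension count over the field $k(s)$.

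First I would invoke the exactness of the Laplace functor: since $k(s)$ is the field of fractions of the principal ideal domain $k[\frac{d}{dt}]$, it is a flat $k[\frac{d}{dt}]$-module, so tensoring with $k(s)$ preserves the short exact sequence. This yields the exact sequence of $k(s)$-vector spaces
\begin{equation*}
0 \rightarrow \hat{\mathfrak{E}} \rightarrow \hat{\mathfrak{F}} \rightarrow \hat{\Lambda} \rightarrow 0
\end{equation*}
as already written above Eq.~\eqref{preslaplace}. Additivity of dimension in a short exact sequence of vector spaces then gives
\begin{equation*}
\dim_{k(s)}(\hat{\Lambda}) \;=\; \dim_{k(s)}(\hat{\mathfrak{F}}) \,-\, \dim_{k(s)}(\hat{\mathfrak{E}}).
\end{equation*}

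Next I would identify the three dimensions. On the left, the identity $\dim_{k(s)}(\hat{\Lambda}) = \mathrm{rk}(\Lambda)$ was established when the Laplace functor was defined. For $\hat{\mathfrak{F}}$, freeness of $\mathfrak{F}$ on the basis $W_1,\dots,W_\mu$ implies $\dim_{k(s)}(\hat{\mathfrak{F}}) = \mu$. The key step is the middle term: by construction $\mathfrak{E}$ is generated, as a $k[\frac{d}{dt}]$-module, by the $\nu$ elements $\sum_{\kappa=1}^{\mu} a_{\iota\kappa} W_\kappa$ for $\iota = 1,\dots,\nu$, i.e.\ by the rows of the matrix $P$ read in the basis $W_1,\dots,W_\mu$. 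After tensoring, $\hat{\mathfrak{E}}$ is the $k(s)$-linear span inside $\hat{\mathfrak{F}} \simeq k(s)^\mu$ of those same $\nu$ vectors, so $\dim_{k(s)}(\hat{\mathfrak{E}})$ equals the rank of $P$ viewed as a matrix with entries in the field $k(s)$, which is exactly $\mathrm{rk}(P)$.

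Combining the three identifications produces $\mathrm{rk}(\Lambda) = \mu - \mathrm{rk}(P)$, as claimed. The only substantive point, and the one I would be careful to justify explicitly, is the last identification: one must make sure that the rank of a finitely generated submodule of a free module over $k[\frac{d}{dt}]$ coincides with the rank of its matrix of generators once coefficients are extended to the quotient field, which is a direct consequence of the fact that $k(s)$-linear dependencies among the rows of $P$ are detected by vanishing of the same minors whether computed in $k[s]$ or in $k(s)$.
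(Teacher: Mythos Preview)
Your proposal is correct and follows essentially the same route as the paper: apply the (exact) Laplace functor to the presentation~\eqref{sequence}, then read off $\dim_{k(s)}(\hat{\Lambda}) = \mu - \mathrm{rk}(P)$ and use $\dim_{k(s)}(\hat{\Lambda}) = \mathrm{rk}(\Lambda)$. The paper compresses the dimension count into a single line (``Thus $\dim_{k(s)} (\hat{\Lambda}) = \mu - {\rm rk} (P)$''); your version spells out the additivity step and the identification $\dim_{k(s)}(\hat{\mathfrak{E}}) = \mathrm{rk}(P)$, which is a welcome clarification but not a different argument.
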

and
\begin{corollary}\label{corol}
    If $P$ is square, i.e., $\mu = \nu$, then $\Lambda$ is torsion if and only if $\det (P) \neq 0$.
\end{corollary}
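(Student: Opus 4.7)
The plan is to chain together two facts from the excerpt: the characterization of torsion modules in terms of rank (from Section 2.1, ``the rank of $\mathfrak{M}$ is $0$ if and only if $\mathfrak{M}$ is torsion''), and the rank formula ${\rm rk}(\Lambda) = \mu - {\rm rk}(P)$ from the proposition immediately preceding the corollary. Combining these two gives that $\Lambda$ is torsion if and only if ${\rm rk}(P) = \mu$. The remaining task is to translate ``${\rm rk}(P) = \mu$'' into the condition $\det(P) \neq 0$ when $P$ is square.

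For that translation, I would work inside the field of fractions $k(s)$. The matrix $P \in k[s]^{\mu \times \mu}$ can be viewed as a square matrix with entries in the field $k(s)$ (cf.\ the presentation \eqref{preslaplace} obtained via the Laplace functor). For square matrices over a field, standard linear algebra gives that full rank is equivalent to nonzero determinant. Moreover, the determinant computed in $k[s]$ coincides with the determinant computed in $k(s)$, since determinants are polynomial expressions in the entries. Hence ${\rm rk}(P) = \mu$ if and only if $\det(P) \neq 0$ in $k(s)$, which is the same as $\det(P) \neq 0$ as an element of $k[s]$.

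Putting the two ingredients together yields the equivalence in both directions, and the corollary follows in essentially one line from the preceding proposition. There is no serious obstacle: the only point that requires a moment's care is noticing that the notion of rank used in the proposition (which is, implicitly, the rank over $k(s)$, since it was deduced by tensoring with $k(s)$) is exactly the rank whose nonvanishing of the determinant detects fullness for a square matrix. Everything else is bookkeeping.
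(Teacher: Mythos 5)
Your proof is correct and follows exactly the route the paper intends: combine the rank formula ${\rm rk}(\Lambda)=\mu-{\rm rk}(P)$ with the fact that a finitely generated module over $k[\frac{d}{dt}]$ is torsion if and only if its rank is $0$, then note that a square matrix over the field $k(s)$ has full rank if and only if its determinant is nonzero. Your remark that the rank in question is the one over $k(s)$ is precisely the point the paper relies on via the Laplace functor, so there is nothing to add.
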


\subsection{Lagrangians}\label{lagr}
Call a \emph{Lagrangian}, or \emph{cost function}, any element $\mathcal{L}$ of the symmetric algebra $S(\Lambda)$ of $\Lambda$ over $k$. Note that $S(\Lambda)$ 
\begin{itemize}
    \item is a {\em differential ring} where $k$ is the field of {\em constants}, i.e., $\forall a \in k$, $\dot{a} = \frac{da}{dt} = 0$; 
    \item is a graded module:
 $$S(\Lambda) = \bigoplus_{\nu \geqslant 0} S^\nu (\Lambda) $$
where $S^\nu (\Lambda)$ is the symmetric power of order $\nu$.
\end{itemize}
Moreover,
\begin{itemize}
\item The \emph{degree} of $\mathcal{L}$ is the minimal order $d$ such that $\mathcal{L} \in \bigoplus_{\nu \leqslant d} S^\nu (\Lambda) $. 
    \item $\mathcal{L}$ is said to be \emph{quadratic} if and only if $\mathcal{L} \in S^2(V)$.
\end{itemize}

Let $\Lambda$ be controllable and ${\bf y} = \{y_1, \dots, y_m\}$ be a basis. Thus $\{y_{\iota}^{(\nu_\iota)} \mid \iota = 1, \dots, m; \nu_\iota \geqslant 0 \}$ is a basis of the $k$-vector  space $\Lambda$, and $S(\Lambda)$ is isomorphic to the differential ring $k\langle {\bf y} \rangle$, i.e., to the ring of $k$-polynomials $k[y_{\iota}^{(\nu_\iota)} \mid \iota = 1, \dots, m; \nu_\iota \geqslant 0 ]$ in an infinite number of variables.

The next trivial consequences of the definitions connect the usual LQR presentation with our formalism:
\begin{itemize}
    \item If $\mathcal{L}$ is a $k$-polynomial of the control and state variables, then $\mathcal{L} \in S(\Lambda)$.
    \item If $\mathcal{L}$ is a $k$-polynomial of degree $2$ of the control and state variables, then it is of degree $2$ in $S(\Lambda)$.
    \item If $\mathcal{L}$ is a homogeneous $k$-polynomial of degree $2$ of the control and state variables, then it is quadratic.
\end{itemize}

\subsection{Variational system}\label{variat}
Consider the quotient of the differential rings  
$$
\Lambda_\Delta = \bigoplus_{\mu \geqslant 1} S^\mu (\Lambda) / \bigoplus_{\nu \geqslant 2} S^\nu (\Lambda)
$$
The restriction of the canonical epimorphism 
$\bigoplus_{\mu \geqslant 1} S^\mu (\Lambda) \to \Lambda_\Delta$
to $S^1(\Lambda) = \Lambda$ defines a $k[\frac{d}{dt}]$-linear isomorphim $\Delta: \Lambda \to \Lambda_\Delta$ between modules, where
\begin{itemize}
    \item $\Lambda_\Delta$ is the \emph{variational system} associated to $\Lambda$, 
    \item $\Delta$ is the (\emph{K\"{a}hler}) \emph{differential}.
\end{itemize}
$\Lambda_\Delta$ is thus controllable if and only if $\Lambda$ is controllable.

\section{Open-loop optimality on a finite time horizon}
\label{euler}
\subsection{Trajectory}
From now on, the base field~$k$ is the field $\mathbb{R}$ of real numbers. 
The set $C^\infty (\mathcal{T})$ of smooth functions $\mathcal{T} \to \mathbb{R}$, where $\mathcal{T}$ is an open subset of $\mathbb{R}$, is a $\mathbb{R}[\frac{d}{dt}]$-module, which is not finitely generated. According to~\cite{fliess92}, a \emph{trajectory}  of a system $\Lambda$ is nothing but a $\mathbb{R}[\frac{d}{dt}]$-module morphism $\phi: \Lambda \to C^\infty(\mathcal{T})$.
\begin{remark}
The choice of the functional analytic nature
of the trajectories is, of course, manifold.
\end{remark}
\noindent In the sequel, $\phi (\lambda)$ will no longer be used, and will be replaced, with a slight abuse of notation, by~$\lambda$ or even $\lambda (t)$. 
\subsection{Euler-Lagrange equations and modules}\label{EL}
Let $\Lambda$ be a controllable system, and ${\bf y} =\{y_1, \dots, y_m\}$ a flat output. Introduce the \emph{Lagrangian} (see Sect. \ref{lagr})
$
\mathcal{L}(y_1, \dot{y}_1, \dots, y_{1}^{(\mu_1)}, \dots, y_m, \dot{y}_m, \dots, y_{m}^{(\mu_m)}) 
$.
Corresponding to the criterion
$J = \int_{0}^{T} \mathcal{L} dt$,
where $T > 0$ is the \emph{time horizon},
the \emph{stationary} solutions satisfy the Euler-Lagrange system of ordinary differential equations.
\begin{equation}
\label{ele}
\frac{\partial \mathcal{L}}{\partial y_\iota} - \frac{d}{dt} \frac{\partial \mathcal{L}}{\partial \dot{y}_\iota} + \dots + (- 1)^{\mu_\iota} \frac{d^{\mu_\iota}}{dt^{\mu_\iota}} \frac{\partial \mathcal{L}}{\partial y_{\iota}^{(\mu_\iota)}} = 0, \hspace{0.16cm} \iota = 1, \dots, m
\end{equation}   
The next results, although trivial, are pivotal:
\begin{proposition}
    If $\mathcal{L}$ is of degree $2$, Eq.~\eqref{ele} is a system of non-necessarily homogeneous linear ordinary differential equations with constant coefficients in $\mathbb{R}$.
\end{proposition}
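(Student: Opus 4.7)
The plan is to exploit the identification from Sect.~\ref{lagr} that makes $S(\Lambda)$ isomorphic, once a flat output $\mathbf{y}$ is fixed, to the polynomial ring $\mathbb{R}[y_\iota^{(\nu)} \mid \iota=1,\dots,m;\ \nu\geqslant 0]$ in countably many indeterminates. Within this ring, ``degree $2$'' is the usual total polynomial degree, so every $\mathcal{L}$ of degree $2$ can be written as
$$\mathcal{L} \;=\; c \;+\; \sum_{(\iota,\nu)} a_{\iota,\nu}\, y_\iota^{(\nu)} \;+\; \sum_{(\iota,\nu),(\jmath,\sigma)} b_{(\iota,\nu),(\jmath,\sigma)}\, y_\iota^{(\nu)} y_\jmath^{(\sigma)},$$
with $c,a_{\iota,\nu},b_{(\iota,\nu),(\jmath,\sigma)} \in \mathbb{R}$ and only finitely many terms non-zero. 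This display is the only real setup needed, and the rest is a direct computation tracking degrees.

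The first step is to differentiate $\mathcal{L}$ formally with respect to a single indeterminate $y_\iota^{(\nu)}$. Since $\partial/\partial y_\iota^{(\nu)}$ strictly decreases the total degree, each $\partial\mathcal{L}/\partial y_\iota^{(\nu)}$ has degree at most $1$, i.e., it is an affine $\mathbb{R}$-combination of the indeterminates $y_\jmath^{(\sigma)}$ plus the constant $a_{\iota,\nu}$. The second step is to apply $d^{\nu}/dt^{\nu}$: by construction this operator is $\mathbb{R}$-linear on $\mathbb{R}\langle\mathbf{y}\rangle$ and sends each $y_\jmath^{(\sigma)}$ to $y_\jmath^{(\sigma+\nu)}$, so it preserves the class of affine $\mathbb{R}$-combinations of the $y_\jmath^{(\sigma)}$ (killing the constant term when $\nu\geqslant 1$). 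The third step is to assemble the signed sum appearing on the left-hand side of~\eqref{ele}; being a finite $\mathbb{R}$-linear combination of affine expressions with constant coefficients, it is itself such an expression. This yields, for each $\iota=1,\dots,m$, an identity of the form $\sum_{\jmath,\sigma} \gamma^{(\iota)}_{\jmath,\sigma}\, y_\jmath^{(\sigma)} + a_{\iota,0} = 0$ with $\gamma^{(\iota)}_{\jmath,\sigma}\in\mathbb{R}$, which is exactly a linear ordinary differential equation with constant real coefficients in the unknowns $y_1,\dots,y_m$, with constant inhomogeneous term $a_{\iota,0}$.

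Two remarks close the argument. The non-necessarily homogeneous nature of the system is located precisely in the degree-$1$ part of $\mathcal{L}$: only the coefficients $a_{\iota,0}$ survive, since all $a_{\iota,\nu}$ with $\nu\geqslant 1$ are killed by the $\nu$-fold time derivative. The pure constant $c$ drops out upon taking any partial derivative and therefore plays no role. I do not foresee a genuine obstacle in the argument; the only subtlety worth flagging in the write-up is to note that it is legitimate to treat the $y_\iota^{(\nu)}$ as independent polynomial variables while simultaneously applying $d/dt$ to them, which is justified by the differential-ring structure on $S(\Lambda)$ recalled at the start of Sect.~\ref{lagr}.
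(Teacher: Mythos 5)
Your proof is correct and is exactly the straightforward degree-counting computation the paper has in mind: the paper states this result without proof (labelling it ``trivial''), and your verification --- partial derivatives of a degree-$2$ element are affine, $\frac{d^\nu}{dt^\nu}$ preserves affine combinations while killing constants for $\nu\geqslant 1$, so only $a_{\iota,0}$ survives as the inhomogeneous term --- is the intended argument. Nothing to add.
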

\begin{corollary}\label{pivotal}
    If $\mathcal{L}$ is quadratic, Eq.~\eqref{ele} is a system of homogeneous linear ordinary differential equations with constant coefficients in $\mathbb{R}$.
\end{corollary}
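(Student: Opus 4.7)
The plan is to invoke the preceding proposition directly and then show that in the quadratic case the constant (inhomogeneous) term is forced to vanish. Since the previous proposition already establishes that for $\mathcal{L}$ of degree $2$ the Euler–Lagrange equations~\eqref{ele} are linear ODEs in $\mathbb{R}[\frac{d}{dt}]$, only the absence of a free term remains to be proved.

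Concretely, I would first expand a quadratic Lagrangian, using the isomorphism $S(\Lambda) \simeq \mathbb{R}[y_\iota^{(\nu_\iota)}]$ established in Sect.~\ref{lagr}, as a finite sum
\begin{equation*}
\mathcal{L} \;=\; \sum_{(\iota,\kappa,j,\ell)} c_{\iota\kappa j\ell}\, y_\iota^{(j)} y_\kappa^{(\ell)}, \qquad c_{\iota\kappa j\ell} \in \mathbb{R},
\end{equation*}
noting that by hypothesis $\mathcal{L} \in S^2(\Lambda)$ and therefore carries no term of degree $0$ or $1$. Next I would compute, for any index $(\iota,j)$,
\begin{equation*}
\frac{\partial \mathcal{L}}{\partial y_\iota^{(j)}} \;\in\; S^1(\Lambda) \;=\; \Lambda,
\end{equation*}
observing that this partial derivative is itself a finite $\mathbb{R}$-linear combination of the generators $y_\kappa^{(\ell)}$, i.e., a homogeneous element of degree $1$ with no constant term.

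Applying $\frac{d^j}{dt^j}$ preserves this property because $\mathbb{R} \subset \ker \frac{d}{dt}$ (constants are constants of the differential ring), so each summand $(-1)^j \frac{d^j}{dt^j}\frac{\partial \mathcal{L}}{\partial y_\iota^{(j)}}$ in~\eqref{ele} remains a $\mathbb{R}$-linear combination of elements $y_\kappa^{(\ell)}$ with no free term. Summing over $j = 0, \dots, \mu_\iota$ yields an ODE of the form $\sum_{\kappa,\ell} d_{\iota\kappa\ell}\, y_\kappa^{(\ell)} = 0$ with $d_{\iota\kappa\ell} \in \mathbb{R}$, i.e., a homogeneous linear ODE with constant coefficients.

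The argument is essentially immediate once one has the graded structure of $S(\Lambda)$ at hand; the only potential pitfall is bookkeeping, namely making sure that partial derivatives and time derivatives both respect the grading by polynomial degree (which they do, the former lowering it by one and the latter preserving it). There is no serious obstacle, and the statement follows as a direct specialization of the preceding proposition by tracking this grading.
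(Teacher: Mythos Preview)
Your argument is correct and is exactly the natural one: the paper itself gives no proof, labeling both the proposition and this corollary as ``trivial,'' and your grading argument (partial differentiation lowers the degree in $S(\Lambda)$ by one, $\frac{d}{dt}$ preserves it) is precisely the obvious verification the authors had in mind. Nothing to add.
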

\noindent Eq.~\eqref{ele} may then be written in matrix form
\begin{equation*}\label{matrix}
\mathcal{M} \begin{pmatrix}
y_1 \\
\vdots \\
y_m \\
\end{pmatrix} = 0  
\end{equation*}
where ${\mathcal{M}} \in \mathbb{R}[\frac{d}{dt}]^{m \times m}$. 
It defines, according to Corollary \ref{corol}, the module $\mathcal{E} = {\rm span}_{\mathbb{R}[\frac{d}{dt}]}({\bf y})$, which is called the \emph{Euler-Lagrange module}. Corollary \ref{corol} states that it is torsion if and only if $\det (\mathcal{M}) \neq 0$. Then $\mathcal{E}$ is also called the 
\emph{Euler-Lagrange vector space}, which is finite-dimensional.

\subsection{The two-point boundary problem: the monovariable case}
\subsubsection{General situation}

To the Euler-Lagrange equation $a_0 y + a_1 \dot{y} + \dots + a_N y^{(N)} = 0$, $a_0, a_1, \dots, a_N \in \mathbb{R}$, $a_N \neq 0$, corresponds the Euler-Lagrange vector space $\mathfrak{E}$ of dimension $N$. Any element of a basis $B = \{\sigma_1(t), \dots, \sigma_N (t)\}$ is of the form $\Sigma_{\rm finite} P(t)e^{at}\sin(\omega t + \varphi)$, $P(t) \in \mathbb{R}[t]$, $a, \omega, \varphi \in \mathbb{R}$: it is an entire analytic function of the complex variable $t \in \mathbb{C}$. It might therefore be natural to replace $\mathbb{R}$ by $\mathbb{C}$: $B$ is also a basis of the $\mathbb{C}$-vector space $\mathfrak{E}_\mathbb{C} = \mathbb{C}\bigotimes_\mathbb{R} \mathfrak{E}$. 
Set $y(t) = c_1 \sigma_1 (t) + \dots + c_N \sigma_N (t)$, $c_1, \dots, c_N \in \mathbb{C}$, such that $y^{(\mu_\iota)} (0)$, $\mu_\iota \leqslant M < N$, $y^{(\nu_\kappa)} (T)$, $\nu_\kappa \leqslant N - M - 1 $, are given. It yields
\begin{equation}\label{sin}
\begin{pmatrix}
        y (0) \\
        \vdots \\
        y^{(M)}(0) \\
        y (T) \\
        \vdots \\
        y^{(N - M -1)} (T)
    \end{pmatrix} =
W(T)
    \begin{pmatrix}
        c_0 \\
        \vdots \\
        c_{N}
    \end{pmatrix}
\end{equation}
where $W(T)$ is a $N\times N$ matrix:
\begin{equation}\label{W}
   W(T) = \begin{pmatrix}
        \sigma_1(0) & \cdots  & \sigma_{N}(0) \\
        \dot{\sigma}_1(0) & \cdots & \dot{\sigma}_{N}(0) \\
         \vdots \\
         \sigma_{1}^{(M)}(0) & \cdots & \sigma_{N}^{(M)}(0) \\
          \sigma_1(T) & \cdots  & \sigma_{N}(T) \\
        \dot{\sigma}_1(T) & \cdots & \dot{\sigma}_{N}(T) \\
         \vdots \\
         \sigma_{1}^{(N - M - 1)}(T) & \cdots & \sigma_{N}^{(N - M - 1)}(T) 
    \end{pmatrix} 
\end{equation}
Introduce the $N \times N$ matrix:
\begin{equation}\label{2}
   \mathcal{W}(T) = \begin{pmatrix}
        \sigma_1(0) & \cdots  & \sigma_{N}(0) \\
        \dot{\sigma}_1(0) & \cdots & \dot{\sigma}_{N}(0) \\
         \vdots \\
         \sigma_{1}^{(M)}(0) & \cdots & \sigma_{N}^{(M)}(0) \\
          \sigma_{1}^{(M + 1)}(T) & \cdots  & \sigma_{N}^{(M + 1)}(T) \\
        \sigma_1^{(M + 2)}(T) & \cdots & \sigma_{N}^{(M + 2)}(T) \\
         \vdots \\
         \sigma_{1}^{(N)}(T) & \cdots & \sigma_{N}^{(N)}(T) 
    \end{pmatrix} 
\end{equation}
Assume, without any loss of generality, that the Wronskian (see, e.g., \cite{bostan,kaplansky}) satisfies $\det (\mathcal{W}(0)) \neq 0$. Then, $\det (\mathcal{W} (T))$, which is an entire function of the complex variable $T$, is equal to $0$ only at isolated complex values of $T$. If $\det (W(T)) \equiv 0$, the lines of $W(T)$ are linearly dependent over the {\em differential field} $\mathfrak{K}$ of meromorphic functions of the complex variable $T \in \mathbb{C}$.\footnote{$\mathfrak{K}$ is the quotient field of the differential ring of entire functions (see, e.g., \cite{rudin}).} Straightforward calculations on the $\mathfrak{K}$-linear combinations of the lines of $W(T)$ after successive derivations with respect to $T$ show that the lines of $\mathcal{W}(T)$ are linearly dependent. It contradicts $\det (\mathcal{W} (0)) \neq 0$. We have proved the following result:

\begin{proposition}\label{prop}
  The solution of the two-point boundary value problem corresponding to Eq.~\eqref{sin} is \emph{almost always solvable}: it exists and is unique for $\forall T > 0$, except perhaps for isolated values of $T$.
\end{proposition}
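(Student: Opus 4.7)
The plan is to show that $\det W(T)$, as a function of the complex variable $T$, is entire and not identically zero; the proposition then follows from the identity theorem, since the zero set of a non-trivial entire function is discrete.

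For analyticity, each $\sigma_i$ is a finite $\mathbb{R}$-linear combination of terms $P(t)e^{at}\sin(\omega t+\varphi)$, hence entire in $t\in\mathbb{C}$; thus $\det W(T)$ and $\det \mathcal{W}(T)$ are entire in $T$. The non-vanishing baseline comes from the classical Wronskian of $N$ linearly independent solutions of a constant-coefficient homogeneous linear ODE: up to reindexing of rows, $\mathcal{W}(0)$ is a Wronskian matrix of $\sigma_1,\dots,\sigma_N$ (with derivative orders $0,\dots,M,M{+}1,\dots$ all evaluated at $0$), which is nowhere singular, so $\det \mathcal{W}(0)\neq 0$.

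The heart of the argument is a \emph{reductio ad absurdum}. Suppose $\det W(T)\equiv 0$. Then the rows of $W(T)$ are linearly dependent over the field $\mathfrak{K}$ of meromorphic functions of $T$, yielding a non-trivial relation $\sum_i \lambda_i(T)\,R_i(T)=0$. The rows indexed by $\mu_\iota\le M$ are $T$-independent constants, while the rows indexed by $\nu_\kappa\le N-M-1$ have the form $(\sigma_j^{(\nu_\kappa)}(T))_j$. Differentiating the relation successively with respect to $T$ annihilates each constant row after sufficiently many iterations and simultaneously raises the derivative orders on the $T$-dependent rows. Organising this as row reduction over the field $\mathfrak{K}$ converts the original dependence into a non-trivial $\mathfrak{K}$-linear dependence among the rows of $\mathcal{W}(T)$, so $\det \mathcal{W}(T)\equiv 0$. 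Specialising at $T=0$ contradicts $\det \mathcal{W}(0)\neq 0$, and we conclude $\det W(T)\not\equiv 0$.

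The principal obstacle lies in the bookkeeping of the differentiation--elimination step: one must guarantee that the coefficient vector $(\lambda_i)$ does not collapse to zero once the constant rows are eliminated and the derivative orders are shifted upwards. The cleanest way to handle this is to show directly that ${\rm rk}_\mathfrak{K} W(T) = {\rm rk}_\mathfrak{K} \mathcal{W}(T)$ via the same operations, so non-degeneracy of $\mathcal{W}(0)$ forces non-degeneracy of $W(T)$ over $\mathfrak{K}$. Once $\det W(T)$ is known to be a non-identically-zero entire function, the existence and uniqueness of the solution for all $T>0$ except at the isolated zeros of $\det W(T)$ is immediate from Cramer's rule applied to~\eqref{sin}.
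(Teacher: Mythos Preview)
Your proposal follows essentially the same line as the paper's argument: both show that $\det W(T)$ is an entire function of $T$, set up $\mathcal{W}(T)$ so that $\det\mathcal{W}(0)\neq 0$ by the classical Wronskian, and then argue by contradiction that a putative identity $\det W(T)\equiv 0$ would force a $\mathfrak{K}$-linear dependence among the rows of $\mathcal{W}(T)$ via successive differentiation in $T$. The paper compresses this last step into the phrase ``straightforward calculations on the $\mathfrak{K}$-linear combinations of the lines of $W(T)$ after successive derivations''; you expand on it and, to your credit, explicitly flag the bookkeeping issue (non-collapse of the coefficient vector) and propose the cleaner reformulation ${\rm rk}_{\mathfrak{K}}W(T)={\rm rk}_{\mathfrak{K}}\mathcal{W}(T)$. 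One small inaccuracy: differentiating the relation $\sum_i\lambda_i(T)R_i(T)=0$ does \emph{not} annihilate the constant rows---they reappear with coefficients $\lambda_i'(T)$---so the elimination has to be organised as row operations over the differential field $\mathfrak{K}$ rather than as mere iterated differentiation; your rank-equality formulation is the right way to phrase this, and it matches what the paper is implicitly doing.
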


\subsubsection{The state variable representation}
Associate with Eq.~\eqref{ex1} the criterion
\begin{equation}\label{J1}
    J_1 = \int_{0}^{T} \mathfrak{Q}(x_1, \dots, x_n) + {\mathfrak{r}} u^2,  \quad \mathfrak{r} \in \mathbb{R}
\end{equation}
where $\mathfrak Q$ is a quadratic form with respect to the state variables. Formula~\eqref{ele} shows that the order with respect to $y = x_1$ of the corresponding linear homogeneous Euler-Lagrange equation is $2n$. Proposition \ref{prop} yields  
\begin{corollary}
    The two-point boundary problem associated to Eq.~\eqref{J1} for $x_1(0), \dots, x_n(0), x_1(T), \dots, x_n(T)$ is almost always solvable.
\end{corollary}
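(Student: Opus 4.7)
The plan is to reduce the claim directly to Proposition \ref{prop} by identifying the correct values of $N$ and $M$ in the boundary data, after expressing everything in terms of the flat output $y = x_1$.

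First, I would exploit the cascade structure of the controllable canonical form \eqref{ex1}: the relations $\dot{x}_i = x_{i+1}$ give $x_i = y^{(i-1)}$ for $i = 1,\dots,n$, and the last line yields $u = b^{-1}\bigl(y^{(n)} + a_{n-1} y^{(n-1)} + \cdots + a_0 y\bigr)$. Substituting into $J_1$ rewrites the criterion as $\int_0^T \mathcal{L}(y,\dot{y},\dots,y^{(n)})\, dt$, where $\mathcal{L}$ is a homogeneous polynomial of degree $2$ in $y,\dot{y},\dots,y^{(n)}$; in particular $\mathcal{L}$ is quadratic in the sense of Sect.~\ref{lagr}. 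The coefficient of $(y^{(n)})^2$ is $\mathfrak{r}/b^2$, which is nonzero as soon as $\mathfrak{r}\neq 0$ (since $b\neq 0$ by the canonical form), so the highest-order term is genuinely present.

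Next, Corollary \ref{pivotal} guarantees that the Euler-Lagrange equation \eqref{ele} associated to $\mathcal{L}$ is a homogeneous linear ODE with constant coefficients in $\mathbb{R}$, and the order count $2n$ follows from differentiating the term $\partial \mathcal{L}/\partial y^{(n)}$ exactly $n$ times in \eqref{ele}. The prescribed data $x_1(0),\dots,x_n(0), x_1(T),\dots,x_n(T)$ translate, under $x_i = y^{(i-1)}$, into $y^{(i)}(0)$ and $y^{(i)}(T)$ for $i = 0,\dots, n-1$. This places us exactly in the setting of Proposition \ref{prop} with $N = 2n$ and $M = n-1$, so that $N - M - 1 = n - 1$ as required. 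Applying Proposition \ref{prop} delivers the almost-always-solvability conclusion.

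The only delicate point is the order tracking in the second paragraph: one must verify that $(y^{(n)})^2$ does not cancel in $\mathcal{L}$. This is automatic because $\mathfrak{Q}(x_1,\dots,x_n) = \mathfrak{Q}(y,\dots,y^{(n-1)})$ contributes nothing at order $n$, so the coefficient $\mathfrak{r}/b^2$ from $\mathfrak{r} u^2$ stands alone. Once this is observed, the rest of the argument is bookkeeping of indices and a direct invocation of Proposition \ref{prop}.
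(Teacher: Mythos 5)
Your argument is correct and follows the paper's own route: reduce to the flat output $y = x_1$ via $x_i = y^{(i-1)}$, observe that the resulting Euler--Lagrange equation is linear, homogeneous, constant-coefficient and of order $2n$, and invoke Proposition~\ref{prop} with $n$ boundary conditions at each endpoint; your explicit check that the coefficient $\mathfrak{r}/b^2$ of $(y^{(n)})^2$ survives is a detail the paper leaves implicit, and it genuinely requires $\mathfrak{r} \neq 0$, which the stated hypothesis $\mathfrak{r} \in \mathbb{R}$ does not guarantee. One small slip: with $N = 2n$ and $M = n-1$ you get $N - M - 1 = n$, not $n-1$, but since the prescribed derivatives at $T$ only go up to order $n-1 \leqslant n$ this does not affect the conclusion.
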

\begin{remark}\label{rest}
    By taking a Lagrangian where the derivative of $y$ is sufficiently large, it becomes possible to add $u(0)$ and $u(T)$ in the two-point boundary value problem.
\end{remark}



\subsection{The two-point boundary value problem: The multivariable case}

Assume that the Euler-Lagrange module $\mathfrak{E}$ is torsion. 
For a single $y_\iota$, the two-point boundary value problem is assumed to be almost always solvable according to Proposition~\ref{prop}. Initial (resp. final) conditions apply to a finite set $\mathcal{I} = \{y_{\iota}^{(\nu_\iota)}\}$ (resp. $\mathcal{J} = \{y_{\varepsilon}^{(\mu_\varepsilon)}\}$). Set $n_i = \dim ({\rm span}_\mathbb{C}(\mathcal{I})) \leqslant {\rm card} (\mathcal{I})$ (resp. $n_j = \dim ({\rm span}_\mathbb{C}(\mathcal{J})) \leqslant {\rm card} (\mathcal{J})$). If $n_i < {\rm card} (\mathcal{I})$ (resp. $n_j < {\rm card} (\mathcal{J})$, then $y_{\iota}^{(\nu_\iota)}(0)$ (resp. $y_{\varepsilon}^{(\mu_\varepsilon)} (T)$ satisfies some $\mathbb{C}$-linear relations and cannot be chosen arbitrarily. It leads to the following formal definition: The initial (resp. final) conditions are said to be \emph{compatible} if and only if there exists $\lambda_i^\star \in ({\rm span}_\mathbb{C}(\mathcal{I}))^\star$ (resp. $\lambda_f^\star \in ({\rm span}_\mathbb{C}(\mathcal{J}))^\star$, where (${\rm span}_\mathbb{C}(\mathcal{I}))^\star$ (resp. (${\rm span}_\mathbb{C}(\mathcal{J}))^\star$) is the dual vector space of ${\rm span}_\mathbb{C}(\mathcal{I})$ (resp. ${\rm span}_\mathbb{C}(\mathcal{J})$), such that $\lambda_i^\star (y_{\iota}^{(\nu_\iota)}) = y_{\iota}^{(\nu_\iota)} (0)$ (resp. $\lambda_f^\star (y_{\varepsilon}^{(\mu_\varepsilon)}) = y_{\iota}^{(\nu_\iota)} (T)$).
\begin{proposition}
    If the Euler-Lagrange module is torsion, the two-point boundary value problem for Eq.~\eqref{ele} is almost always solvable if and only if the initial and final conditions are compatible.
\end{proposition}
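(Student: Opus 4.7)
The plan is to set up the two-point boundary value problem as a linear system whose coefficient matrix is a generalized Wronskian with entire-function entries in~$T$, then reuse the non-vanishing argument of Proposition~\ref{prop}. I first pass to the complexification $\mathfrak{E}_\mathbb{C} = \mathbb{C} \otimes_\mathbb{R} \mathfrak{E}$, which is a finite-dimensional $\mathbb{C}$-vector space of some dimension~$N$ on which $\frac{d}{dt}$ acts as a $\mathbb{C}$-linear endomorphism. A trajectory of $\mathfrak{E}_\mathbb{C}$ is a $\mathbb{C}[\frac{d}{dt}]$-module morphism $\phi:\mathfrak{E}_\mathbb{C}\to C^\infty(\mathcal{T};\mathbb{C})$, uniquely determined by its evaluation-at-$0$ functional on $\mathfrak{E}_\mathbb{C}$, so the $\mathbb{C}$-vector space $\mathcal{S}$ of trajectories is itself $N$-dimensional. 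Necessity is then immediate: if a solution $\phi$ exists, the restrictions of the evaluation-at-$0$ and evaluation-at-$T$ functionals to ${\rm span}_\mathbb{C}(\mathcal{I})$ and ${\rm span}_\mathbb{C}(\mathcal{J})$ yield the required $\lambda_i^\star$ and $\lambda_f^\star$.

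For sufficiency, I extract a $\mathbb{C}$-basis $\{e_1,\dots,e_{n_i}\}\subset\mathcal{I}$ of ${\rm span}_\mathbb{C}(\mathcal{I})$ and similarly $\{f_1,\dots,f_{n_j}\}\subset\mathcal{J}$ of ${\rm span}_\mathbb{C}(\mathcal{J})$; compatibility of the initial (resp. terminal) data means that, once $\phi(e_s)(0)=\lambda_i^\star(e_s)$ (resp. $\phi(f_r)(T)=\lambda_f^\star(f_r)$) is imposed, every other condition prescribed in $\mathcal{I}$ (resp. $\mathcal{J}$) is automatically satisfied by $\mathbb{C}$-linearity. Choosing a $\mathbb{C}$-basis $\{\sigma_1(t),\dots,\sigma_N(t)\}$ of solutions of $\mathcal{M}\mathbf{y}=0$, the unknown coordinates of $\phi$ satisfy an $(n_i+n_j)\times N$ linear system whose coefficient matrix $W(T)$ has entries that are $\mathbb{C}$-linear combinations of the exponential-polynomial building blocks $P(t)e^{at}\sin(\omega t+\varphi)$ evaluated at $0$ or~$T$, hence entire functions of the complex variable~$T$. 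Completing $W(T)$, as in the proof of Proposition~\ref{prop}, by adjoining appropriate derivatives at $T$ of the $\mathcal{J}$-rows produces a square matrix $\mathcal{W}(T)$ whose determinant is entire in~$T$; if $\det\mathcal{W}(0)\neq 0$ then it vanishes only at isolated~$T$, and the differential-algebraic transfer of any relation on the rows of $W(T)$ to a relation on the rows of $\mathcal{W}(T)$ over the field $\mathfrak{K}$ of meromorphic functions of~$T$ forces $\mathrm{rank}\,W(T)=n_i+n_j$ for all but isolated~$T$, which is exactly the solvability of the reduced system.

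The main obstacle is the non-vanishing step: one must exhibit a basis $\{\sigma_1,\dots,\sigma_N\}$ of solutions for which $\det\mathcal{W}(0)\neq 0$. In the monovariable setting of Proposition~\ref{prop} this follows from the classical Wronskian of linearly independent solutions; in the multivariable case the subtlety is that the conditions in $\mathcal{I}\cup\mathcal{J}$ mix derivatives of distinct flat outputs $y_1,\dots,y_m$, so that the completion of $W(T)$ must be performed in a way ensuring that the $N$ rows of $\mathcal{W}(0)$ read off the coordinates of a $\mathbb{C}$-basis of $\mathfrak{E}_\mathbb{C}^\star$. Establishing this reduces to a linear-algebraic genericity statement about the joint span of the prescribed derivative families within $\mathfrak{E}_\mathbb{C}$, which can be handled by exploiting the freedom in the choice of the $\sigma_j$ to break any residual degeneracy, after which the argument concludes exactly as in Proposition~\ref{prop}.
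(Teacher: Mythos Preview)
The paper itself offers no proof of this proposition: it is stated immediately after the definition of compatibility, with only the preceding sentence (``For a single $y_\iota$, the two-point boundary value problem is assumed to be almost always solvable according to Proposition~\ref{prop}'') standing in for an argument. Your write-up is therefore considerably more detailed than the paper's, and the overall strategy---set up a linear system in the basis coefficients with entire-function entries in $T$ and rerun the non-identically-vanishing argument of Proposition~\ref{prop}---is indeed the natural extension of the monovariable proof.

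There is, however, a genuine gap in your last paragraph. The suggestion that one can ``exploit the freedom in the choice of the $\sigma_j$ to break any residual degeneracy'' cannot work: a change of solution basis is a right-multiplication of $W(T)$ and $\mathcal{W}(T)$ by a fixed invertible matrix in $\mathrm{GL}_N(\mathbb{C})$, so it leaves all ranks unchanged. What you actually need is a statement about the \emph{rows}. The rows you adjoin to pass from $W(T)$ to $\mathcal{W}(T)$ are $T$-derivatives of the $f_r$-rows; at $T=0$ these correspond, under your identification $\mathcal{S}\cong\mathfrak{E}_\mathbb{C}^\star$, to the elements $(\tfrac{d}{dt})^k f_r\in\mathfrak{E}_\mathbb{C}$. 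Thus $\det\mathcal{W}(0)\neq 0$ is equivalent to asking that the $e_s$ together with the $\mathbb{C}[\tfrac{d}{dt}]$-submodule generated by the $f_r$ span all of $\mathfrak{E}_\mathbb{C}$. That is a structural condition on the $\mathbb{C}[\tfrac{d}{dt}]$-module $\mathfrak{E}_\mathbb{C}$, not a genericity statement, and it can fail (for instance when some $f_r$ lies in a proper $\tfrac{d}{dt}$-invariant subspace and the $e_s$ do not compensate). In the monovariable case this issue disappears because $\mathfrak{E}_\mathbb{C}$ is cyclic over $\mathbb{C}[\tfrac{d}{dt}]$, which is precisely why the completed matrix in Proposition~\ref{prop} collapses to the ordinary Wronskian at $T=0$; in the multivariable case you must either invoke the paper's standing hypothesis that the single-$y_\iota$ problem is covered by Proposition~\ref{prop}, or supply a genuine module-theoretic argument. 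You also leave unaddressed the case $n_i+n_j\neq N$: since ``almost always solvable'' in this paper means existence \emph{and} uniqueness, the strict inequality $n_i+n_j<N$ already breaks the conclusion regardless of compatibility, so an implicit count hypothesis is being used that your proof should make explicit.
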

\begin{remark}
    If $\mathcal{L} = \sum_{\iota = 1}^{m} \mathcal{L}_\iota$, where $\mathcal{L}_\iota$ is quadratic with respect to $y_\iota$ and its derivatives, then Eq.~\eqref{ele} shows that the initial and final conditions are always compatible.
\end{remark}

\subsection{Four illustrative examples}\label{4}
\subsubsection{Optimal time horizon}\label{oth}
Assume that the horizon $T$ belongs to an open subset $\mathcal{T} \subset \mathbb{R}$, such that the two-point boundary value problem is solvable. Then $J$ may be viewed as a differentiable function of $T$. The horizon $T$ is said to be \emph{stationary} at $t_0 \in \mathcal{T}$ if and only if $\frac{dJ}{dT}(T_0) = 0$. If $J(T)$ is reaching an extremum at $T_0$, then $T_0$ is said to be an \emph{optimal} time horizon.
As a numerical illustration, let's introduce the control canonical form (Sect. \ref{cano})  
\begin{equation*}\label{ex2}
\begin{cases}
\dot{x}_1 &= x_2 \\
\dot{x}_2 &= - 2 x_1 - x_2 + 3 u  
\end{cases}
\end{equation*}
where $x_1$ is a flat output, and the criterion
$$
  J_1 (T) = \int_0^T \left( u^2(\tau) + (100 - x_1(\tau))^2 + \dot{x}_{1}^2(\tau) \right) d \tau
$$
where $x_1(0) = \dot{x}_1 (0) = 0$, $x_1(T) = 100$, $\dot{x}_1 (T) = 0$. Figure \ref{horizon} shows that $T_o \approx \SI{3}{\second}$ is an optimal time horizon:
\begin{figure*}[!ht]
\centering
{\epsfig{figure=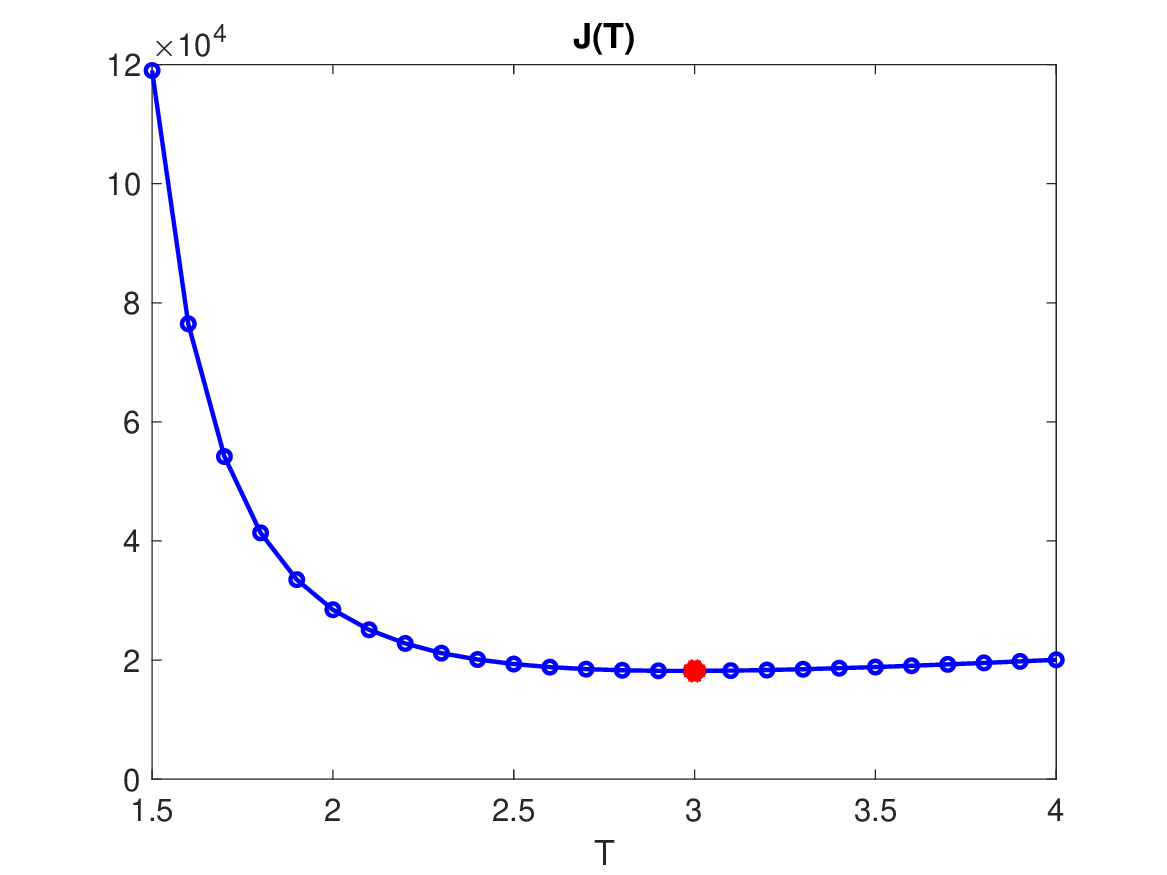,width=\tailleFFF\textwidth}}
\caption{The criterion $J_1$ as a function of the time horizon $T$}\label{horizon}
\end{figure*}

\subsubsection{Optimal parameter design}
Consider a criterion $J(T)$. Assume that 
\begin{itemize}
    \item the two-point boundary value problem is solvable,
    \item $J(T)$ is a differentiable function of $d \geqslant 1$ parameters belonging to an open subset of $\mathbb{R}^d$.
\end{itemize}
Those parameters are said to be \emph{stationary} if and only if the derivatives of $J(T)$ with respect to any of those parameters are $0$. There is an \emph{optimal parameter design} if and only if there is an extremum. 

Consider again a controller canonical form, where $\xi_1$ is a flat output, $v$ the control variable, and $b$ a parameter:
\begin{equation*}\label{ex3}
\begin{cases}
\dot{\xi}_1 &= \xi_2 \\
\dot{\xi}_2 &= - (10 - b) \xi_1 - b\xi_2 + v  
\end{cases}
\end{equation*}
Introduce the criterion 
$$
  J_2 (T) = \int_0^T \left( v^2(t) + 100(100 - \xi_1 (t))^2 \right) dt
$$
The boundary conditions are the same as in Sect. \ref{oth}: $\xi_1(0) = \dot{\xi}_1 (0) = 0$, $\xi_1(T) = 100$, $\dot{\xi}_1 (T) = 0$. Fig. \ref{bo} shows that $b \approx 6$ is an optimal parameter design.
\begin{figure*}[!ht]
\centering

{\epsfig{figure=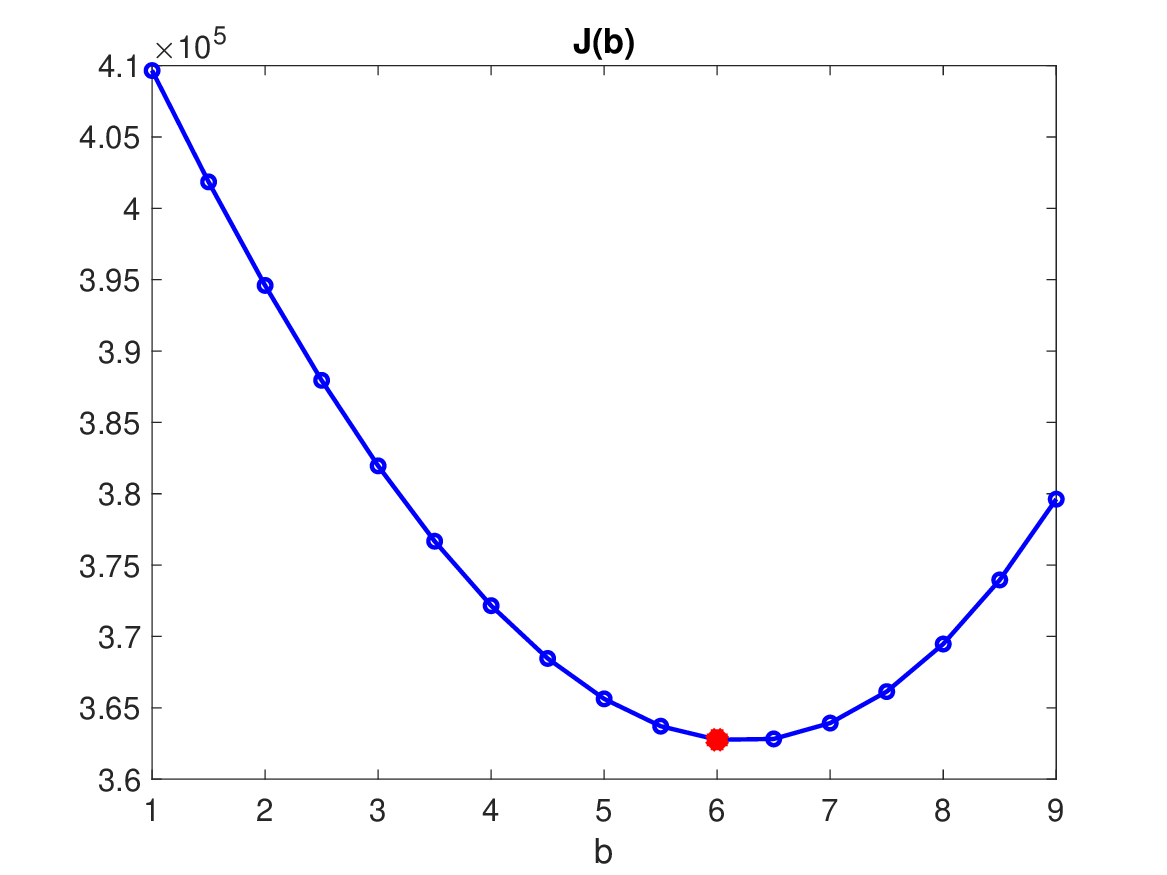,width=\tailleFFF\textwidth}}
\caption{The parametic dependence of criterion $J_2(T)$}\label{bo}
\end{figure*}

\subsubsection{Optimal rest-to-rest trajectory}\label{subrest}
Consider \cite{boscain} the simple integrator $\dot{y} = u$ and the criterion $$J = \int_{0}^{T} (u(\tau))^2d\tau = \int_{0}^{T} (\dot{y}(\tau))^2d\tau$$ The corresponding Euler-Lagrange equation is $\ddot{y} = 0$. Its general solution reads $y_{\rm opt}(t) = C_0 + C_1 t$, $C_0, C_1 \in \mathbb{R}$.
It yields a constant optimal control $u_{\rm opt}(t) = C_2$.  With $y(0) = 0$, $y(T) = 2$ for instance, $C_1 = 0$, $C_2 = \frac{2}{T} = u_{\rm opt} (t)$.

Introduce the new criterion 
\begin{equation*}\label{quad}
J = \int_{0}^{T} \left( (y(\tau))^2 + (u(\tau))^2 \right)d\tau = \int_{0}^{T} \left( (y(\tau))^2 + (\dot{y}(\tau))^2  \right)d\tau
\end{equation*}
The Euler-Lagrange equation becomes $ y - y^{(2)} = 0$. Its general solution is $ y_{\rm opt} (t) = C_1e^{-t}+C_2e^{t}$, $C_1, C_2 \in \mathbb{R}$. With $y(0) = 0$, $y(T) = 2$, $C_1 =\frac{2}{e^{-T} - e^{T}}$, $C_2 =\frac{-2}{e^{-T} - e^{T}}$.

To start and stop at rest, i.e., $\dot{y}(0) = u(0) = \dot{y}(T) = u(T) = 0$, according to Remark \ref{rest}, the previous criterion has to be modified:
\begin{equation}\label{critrest}
    J = \int_{0}^{T} \left( (y(\tau))^2 + (u(\tau))^2 + (\dot{u}(\tau))^2 \right)d\tau = \int_{0}^{T} \left( (y(\tau))^2 + (\dot{y}(\tau))^2 + (\ddot{y}(\tau))^2 \right)d\tau
\end{equation}
It yields the Euler-Lagrange equation
$$ y - y^{(2)} + y^{(4)} = 0$$
Its general solution reads
$$ y_{\rm opt} (t) = e^{-\frac{t\sqrt3}{2}} \left( C_1\cos \left(\frac{t}{2}\right)-C_2\sin \left(\frac{t}{2}\right)\right) + e^{\frac{t\sqrt3}{2}}\left( C_3\cos \left(\frac{t}{2}\right)-C_4\sin \left(\frac{t}{2}\right)\right)$$
where $C_1, C_2, C_3, C_4 \in \mathbb{R}$. When starting at rest $y(0) = \dot{y}(0) = u(0) = 0$, and stopping at rest $y(T) = 2$, $\dot{y}(T) = u(T) = 0$, Fig. \ref{J} displays the evolution of the criterion given by Eq. \eqref{critrest} with respect to $T$. Let us emphasize that it is decreasing rather abruptly. See Fig. \ref{OI} for $y$ and $u$ when $T = 0.5$ and $T = 5$.
\begin{figure*}[!ht]
\centering
{\epsfig{figure=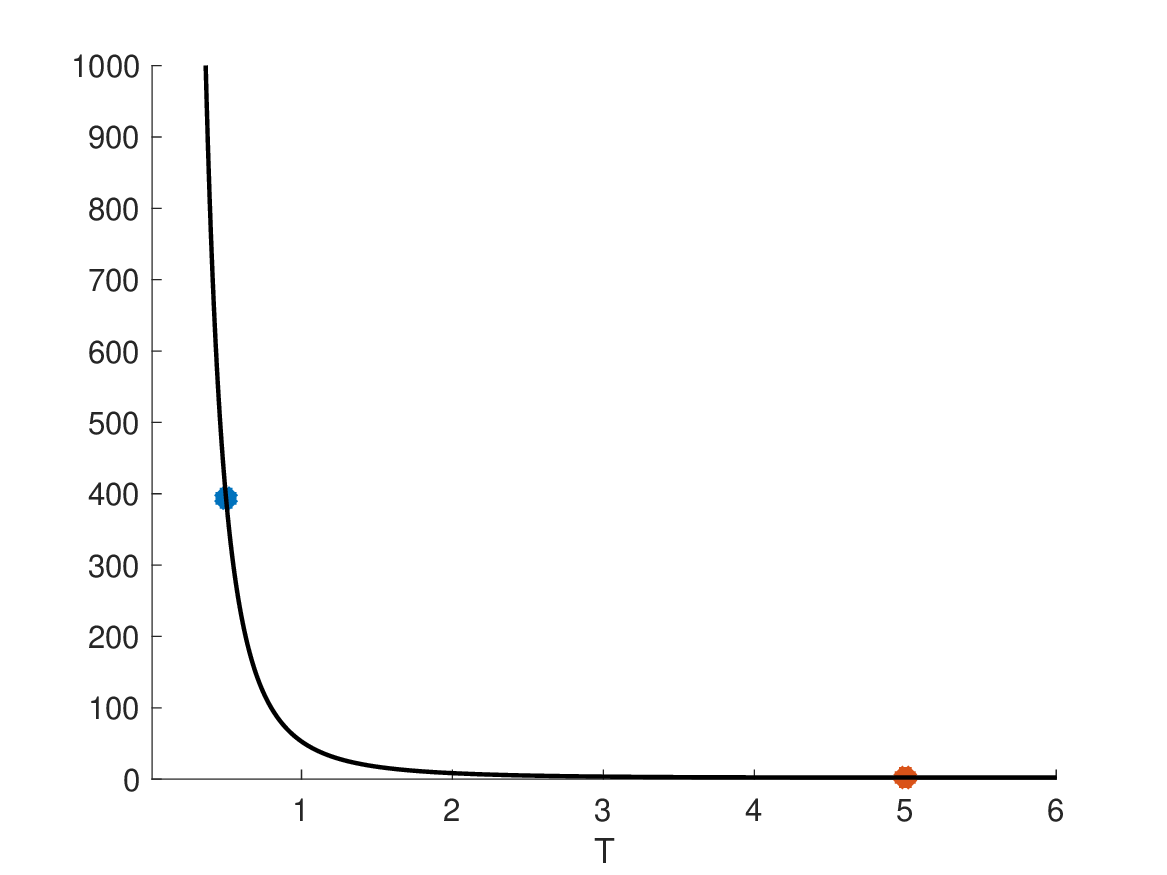,width=0.8\textwidth}}
%
%
\caption{Criterium evoluation w.r.t. $T$}\label{J}
\end{figure*}

\begin{figure*}[!ht]
\centering
\subfigure[\footnotesize $u^\star$]
{\epsfig{figure=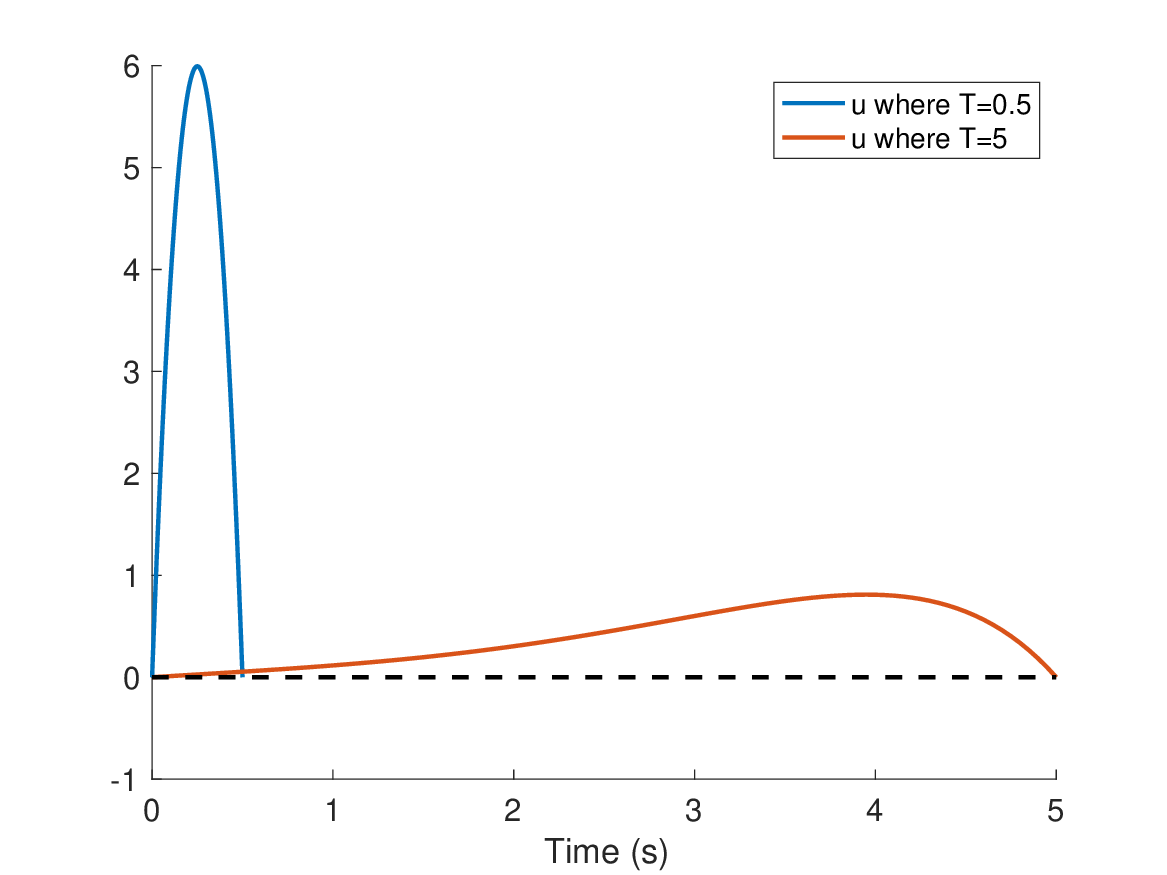,width=0.45\textwidth}}
\subfigure[\footnotesize $y^\star=z^\star$]
{\epsfig{figure=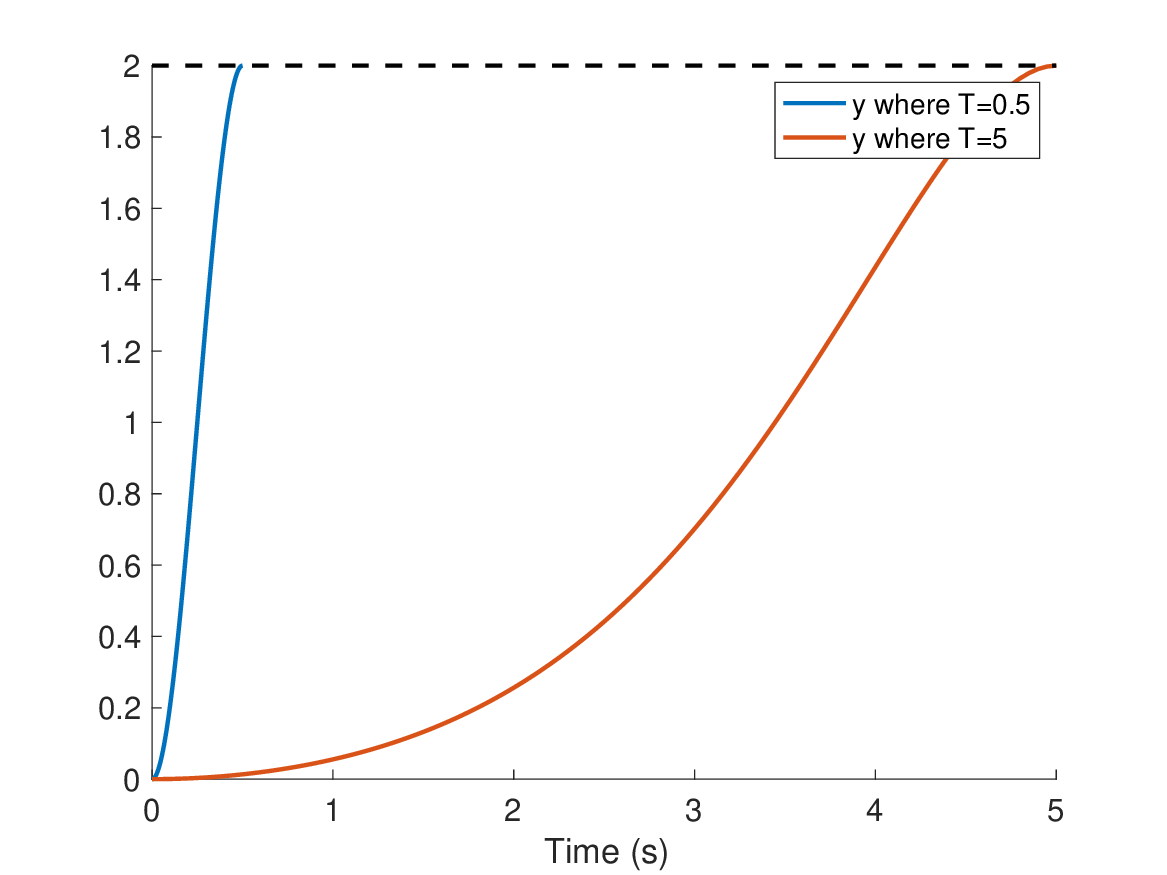,width=0.45\textwidth}}
\caption{Time evolution of input and output $T=0.5$ (blue line) $T=5$ (red line)}\label{OI}
\end{figure*}

\subsubsection{A turnpike-like phenomenon}
Consider \cite{trelat23} the double integrator $\ddot{y} = u$ with the criterion
$$J(y)=\int_{0}^{T}\left( (u(\tau))^2+(\dot y(\tau)^2 \right) d \tau$$
The corresponding Euler-Lagrange equation is $\ddot{y} - y^{(4)} = 0$. Its general solution reads
$$y_{\rm opt} (t) = C_1 + \frac{C_2t}{2} + C_3e^t + C_4e^{-t}, \quad C_1, C_2, C_3, C_4 \in \mathbb{R}$$
Impose the boundary conditions $y(0)=0$, $y(T)=y_{1}^{T}$, $\dot{y}(0)=1$, $\dot y(T)=2$. It yields
$$
C_1=\frac{T-\mathrm{y_{1}^{T}}+{\mathrm{e}}^{2\,T}-4\,T\,{\mathrm{e}}^T+2\,\mathrm{y_{1}^{T}}\,{\mathrm{e}}^T+T\,{\mathrm{e}}^{2\,T}-\mathrm{y_{1}^{T}}\,{\mathrm{e}}^{2\,T}-1}{\left({\mathrm{e}}^T-1\right)\,\left(T-2\,{\mathrm{e}}^T+T\,{\mathrm{e}}^T+2\right)}
$$
$$
C_2= \frac{2\,\mathrm{y_{1}^{T}}-6\,{\mathrm{e}}^T+2\,\mathrm{y_{1}^{T}}\,{\mathrm{e}}^T+6}{T-2\,{\mathrm{e}}^T+T\,{\mathrm{e}}^T+2}
$$
$$
C_3=-\frac{T-\mathrm{y_{1}^{T}}+{\mathrm{e}}^T-2\,T\,{\mathrm{e}}^T+\mathrm{y_{1}^{T}}\,{\mathrm{e}}^T-1}{\left({\mathrm{e}}^T-1\right)\,\left(T-2\,{\mathrm{e}}^T+T\,{\mathrm{e}}^T+2\right)}
$$
$$
C_4=\frac{{\mathrm{e}}^T\,\left(2\,T-\mathrm{y_{1}^{T}}-{\mathrm{e}}^T-T\,{\mathrm{e}}^T+\mathrm{y_{1}^{T}}\,{\mathrm{e}}^T+1\right)}{\left({\mathrm{e}}^T-1\right)\,\left(T-2\,{\mathrm{e}}^T+T\,{\mathrm{e}}^T+2\right)}
$$
Let us switch to the  language of nonstandard analysis. Assume now that $T$ is {\em infinitely large} and $y_{1}^{T}$ \emph{limited}, i.e., not infinitely large (see, e.g., \cite{reeb,goldblatt}). Then $C_1$, $C_2$, $C_3$, $C_4$ are respectively in the {\em halo}, or {\em monad}, of $1$, $0$, $0$, $-1$, i.e. {\em infinitely close} to those quantities \cite{reeb,goldblatt}. It implies that for any limited positive number $\mu$, there exists a limited positive number $\nu$ such that, for any limited time $t > \nu$, $\vert y_{\rm opt} (t) - 1\vert < \mu$. Use now a classic, but imprecise, language: $y_{\rm opt} (t)$  is close to $1$, if $t > 0$ is neither too small nor too large. Why not say that this is also a {\em turnpike} phenomenon (compare with \cite{trelat23}).

\begin{remark}
The above analysis shows at once that the three criteria, which are related to the integrator in Sect. \ref{subrest}, also exhibits turnpike 
phenomena.
\end{remark}

\section{Closing the loop}\label{closing}
\subsection{Pole placement}\label{pole}
The Kalman state-variable representation~\eqref{kalman} yields for the variational system (see Sect. \ref{variat})
\begin{equation}\label{kalmanvar}
    \frac{d}{dt} \begin{pmatrix}
       \Delta x_1 \\ \vdots \\ \Delta x_n
    \end{pmatrix} 
    = F \begin{pmatrix}
       \Delta x_1 \\ \vdots \\ \Delta x_n
    \end{pmatrix} + G 
    \begin{pmatrix}
       \Delta u_1 \\ \vdots \\ \Delta u_m
    \end{pmatrix}
\end{equation}

\noindent Assume that Eq.~\eqref{kalman} is controllable and well-formed. Then Eq.~\eqref{kalmanvar} is also controllable and well-formed. Set $k = \mathbb{R}$. Introduce the classic state feedback (see, e.g., \cite{kailath,lunze,sontag,wolovich}), where $K \in {\mathbb{R}}^{n \times m}$,
\begin{equation}\label{feedbackstab}
    \begin{pmatrix}
       \Delta u_1 \\ \vdots \\ \Delta u_m
    \end{pmatrix} =
    - K \begin{pmatrix}
       \Delta x_1 \\ \vdots \\ \Delta x_n
    \end{pmatrix} 
\end{equation}
Combining Eqs.~\eqref{kalmanvar} and~\eqref{feedbackstab} yields 
\begin{equation*}\label{eigen}
    \frac{d}{dt} \begin{pmatrix}
       \Delta x_1 \\ \vdots \\ \Delta x_n
    \end{pmatrix} 
    = (F - KG) \begin{pmatrix}
       \Delta x_1 \\ \vdots \\ \Delta x_n
    \end{pmatrix}
\end{equation*}
The real parts of the eigenvalues of the square matrix $F - KG \in {\mathbb{R}}^{n \times n}$ may be chosen to be strictly negative. We thus obtain a stabilizing feedback:
\begin{itemize}
\item the trajectory $\{x^{\star}_{\iota} (t), u^{\star}_{\kappa} (t)\}$ is stationary, i.e., it satisfies the Euler-Lagrange system of differential equations,
    \item $\Delta x_\iota (t) = x_\iota (t) - x^{\star}_{\iota} (t)$, $\iota = 1, \dots, n$;
    \item $\Delta u_\kappa (t) = u_\kappa (t) - u^{\star}_{\kappa} (t)$, $\kappa = 1, \dots, m$. 
\end{itemize}

\subsection{Homeostat}\label{stat}
\subsubsection{The monovariable case}
Consider a controllable dynamics $\Lambda$ with a single control variable $u$. Let $y$ be a flat output. It yields for the variational system $\Lambda_\Delta$
\begin{equation}
\label{mono}
\Delta u = \sum_{\rm finite} a_\varepsilon \frac{d^\varepsilon}{dt^\varepsilon} \Delta y, 
\quad a_\varepsilon \in k
\end{equation}
Call $\nu$, $\nu \geqslant 1$, the least integer such that $a_\nu \neq 0$. Eq.~\eqref{mono} may be rewritten as
\begin{equation}
\label{interm}
\frac{d^\nu}{dt^\nu} \Delta y =  \mathfrak{F} + \alpha \Delta u   
\end{equation}
where $\mathfrak{F} = \frac{- 1}{a_\nu} \sum_{\varepsilon \neq \nu} a_\varepsilon \frac{d^\varepsilon}{dt^\varepsilon} \Delta y$, $\alpha = \frac{1}{a_\nu}$.

When $k = \mathbb{R}$, the \emph{homeostat}, which is replacing the ultra-local model \cite{mfc1,mfc2}, is deduced from Eq.~~\eqref{interm}:
\begin{equation}
\label{driver}
\frac{d^\nu}{dt^\nu} \Delta y  =  F + \alpha \Delta u   
\end{equation}
There
\begin{itemize}
    \item $\Delta y = y - y^\star$, $\Delta u = u - u^\star$, where $y^\star$,$u^\star$ are stationary (see Sect. \ref{EL});
    \item $F = \mathfrak{F} + G$, where $G$ stands for all the mismatches and disturbances.
\end{itemize}

\subsubsection{Some data-driven calculations}
The following calculations cover the most common cases, i.e., $\nu = 1, 2$ in Eq.~\eqref{driver} (see \cite{mfc1,mfc2,heol}).

For $\nu = 1$, the following estimate of $F$ has been obtained \cite{mfc1,mfc2,heol} via standard operational calculus:
\begin{equation*}
\label{estim1} F_{\rm est} = - \frac{6}{\mathcal{T}^3} \int_{0}^{\mathcal{T}} \left( (T - 2 \sigma)\Delta \tilde{y}(\sigma) {+} \alpha \sigma (T - \sigma)\Delta \tilde{u}(\sigma)\right)d\sigma
\end{equation*}
where 
\begin{itemize}
\item the time lapse $\mathcal{T} > 0$  is ``small.''
\item $\Delta\tilde y(\sigma)=\Delta y(\sigma+t-\mathcal{T})$, $\Delta \tilde u(\sigma)= \Delta u(\sigma+t-\mathcal{T})$.
\end{itemize}

For $\nu = 2$ , 
\begin{eqnarray*}\label{2}
    F_{\rm est} &=& \dfrac{60}{\mathcal{T}^5}\left[
            \int_0^\mathcal{T} \left(\big(\mathcal{T}-\sigma\big)^2 -4\big(\mathcal{T}-\sigma\big)\sigma + \sigma^2\right)\Delta \tilde{y}(\sigma)d\sigma
            \right.\nonumber \\
    &&      \mbox{} -
       \left.\dfrac{\alpha}2 \int_0^\mathcal{T} (\mathcal{T}-\sigma)^2\sigma^2\Delta  \tilde{u}(\sigma)d\sigma\right]
  \end{eqnarray*}

\subsubsection{Intelligent controllers}\label{intelcontr}
Introduce \cite{mfc1}, when $\nu = 1$, the \emph{intelligent proportional} controller, or \emph{iP},
\begin{equation}
\label{ip}
\Delta u = - \frac{F_{\text{est}} + K_P \Delta y }{\alpha}    
\end{equation}
where $K_P \in \mathbb{R}$ is the \emph{gain}. Combine Eqs.~\eqref{driver} and~\eqref{ip}:
$$
\frac{d}{dt} (\Delta y) + K_P \Delta y = F - F_{\text{est}}
$$
If 
\begin{itemize}
    \item the estimate of $F$ is ``good'', \textit{i.e.}, $F - F_{\text{est}} \approx 0$,
    \item $K_P >0$, 
\end{itemize}
then $\displaystyle\lim_{t \to +\infty} \Delta y \approx 0$. This local stability result is easily extended \cite{mfc1,mfc2} to the case $\nu = 2$ via the \emph{intelligent proportional-derivative} controller, or \emph{iPD},
\begin{equation}
\label{ipd}
\Delta u = - \frac{F_{\text{est}} + K_P \Delta y + K_D \frac{d}{dt} (\Delta y)}{\alpha}
\end{equation}
where the gains $K_P, K_D \in \mathbb{R}$ are chosen such that the roots of $s^2 + K_D s + K_P$ have strictly negative real parts. 
\begin{remark}
Set $\Delta Y = \Delta y + \int_{c}^{t} \Delta y(\sigma) d\sigma$, $0 \leqslant c < t$, and $\mathcal{F} = F + K_D \frac{d}{dt}(\Delta y)$. It yields 
\begin{equation}\label{22}
\frac{d^2}{dt^2}(\Delta Y) = \mathcal{F} + \alpha u
\end{equation}
and
\begin{equation}\label{riachy}
\Delta u = - \frac{\mathcal{F}_{\text{est}} + K_P \Delta y}{\alpha} 
\end{equation}
where $\mathcal{F}_{\text{est}}$ is obtained by adapting Eq.~\eqref{2} via Eq.~\eqref{22}. Eq.~\eqref{riachy} permits avoiding the estimation of the derivative $\Delta y$: This is {\em Riachy's trick} \cite{mfc2}.
\end{remark}

\subsubsection{The multivariable case}
Take a controllable and well-formed dynamics $\Lambda$ with $m$ independent control variables ${\bf u} = \{u_1, \dots, u_m\}$. Let ${\bf y} = \{y_1, \dots, y_m\}$ be a flat output. Then Eq.~\eqref{mono} is replaced, $\forall j = 1, \dots, m$, by
\begin{equation}
\label{multiv}
    \sum_{\rm finite} c_{j,\varsigma,\varpi} \frac{d^\varpi}{dt^\varpi} \Delta y_\varsigma = \Delta u_j, \qquad c_{j,\varsigma,\varpi} \in k
\end{equation}
It can be assumed that, after an appropriate renumbering of the flat output components, $\Delta y_j$ appears in Eq.~\eqref{multiv}. Let $\varpi_j \geqslant 1$ be the smallest integer such that $c_{j,j,\varpi_j} \neq 0$. It yields
\begin{equation}
\label{interm:multi}
\frac{d^{\varpi_j}}{dt^{\varpi_j}} \Delta y_j =  \mathfrak{F}_j + {\mathfrak{a}_j} \Delta u_j, \qquad    \mathfrak{a}_j \in k
\end{equation}
Set $k = \mathbb{R}$. The \emph{homeostat}, which is deduced from Eq.~~\eqref{interm:multi}, reads
\begin{equation*}
\label{driver1}
{\frac{d^{\mu_j}}{dt^{\mu_j}}}\Delta y_j  =  F_j + \alpha_j \Delta u_j,\qquad j=1,\ldots,m
\end{equation*}
with
\begin{itemize}
    \item $\Delta y_j = y_j - y^\star_j$, $\Delta u_j = u_j - u^\star_j$, where $Y^\star = \{y_1^\star,\ldots,y_m^\star\}$, $U^\star = \{u_1^\star,\ldots,u_m^\star\}$ is an extremum;
    \item $F_j = \mathfrak{F}_j + G_j$, where $G_j$ stand for all the mismatches and disturbances.
\end{itemize}
The extension of Section~\ref{intelcontr} to the multi-variable case is straightforward.

\section{Conclusion}\label{conclusion}
Several of our results were sketched in \cite{batna}. This short research announcement furthermore indicates possible nonlinear extensions. They will be developed elsewhere. Note also that our new viewpoint on optimal control, combined with model-free control, permits a clear-cut definition of \emph{model-free predictive control} \cite{mfpc,maroc}, which is today a burning topic (see references in \cite{mfpc}).

Module theory was proposed by Kalman \cite{pnas} a long time ago and in a completely different manner, without, it must be said, leaving much of a trace. It explains why this type of basic tool is currently unknown to most control theorists, despite some excellent publications (see, e.g., in chronological order \cite{sain}, \cite{lomadze}, \cite{oberst}). Results on poles and zeros \cite{poles}, parameter identification \cite{esaim,springer}, state estimation \cite{cr}, system interconnections \cite{inter}, which are based on \cite{fliess90}, did not change this unfortunate situation (see also \cite{bourles}). Our approach to linear quadratic regulators, which differs from the Kalman LQR in numerous key aspects, such as the two-point boundary problem and the optimal time horizon, will hopefully inspire concrete and convincing applications. It might finally win acceptance for \emph{modern algebra} in the sense of 
van der Waerden's celebrated book \cite{wa}, which is based on lectures given by Emil Artin and Emmy Noether in 1926 and 1929, one century ago.



\end{document}